\documentclass[11pt]{amsart}

\usepackage[T1]{fontenc}
\usepackage{lmodern}
\usepackage{microtype}
\usepackage{geometry}
\usepackage{amsmath,amssymb,amsthm,mathtools}
\usepackage{booktabs,array,longtable}
\usepackage{pdflscape}
\usepackage{xcolor}
\usepackage{enumitem}
\usepackage{hyperref}
\hypersetup{
  colorlinks=true,
  linkcolor=blue!55!black,
  citecolor=blue!55!black,
  urlcolor=blue!65!black,
  pdftitle={A Scalar Optimization Proof of Sendov's Conjecture with Reduced Computer Assistance},
  pdfauthor={Senjian An},
  pdfkeywords={Sendov's conjecture, polynomial critical points, scalar optimization, convexity, log-concavity, interval certificate}
}
\usepackage[nameinlink,noabbrev]{cleveref}
\usepackage{url}

\makeatletter
\def\@settitle{%
  \begin{center}
    \baselineskip=15pt\relax
    {\Large\bfseries\@title\par}
  \end{center}}
\def\@setauthors{%
  \begingroup
  \trivlist
  \centering
  \item\relax
  {\large\authors\par}
  \vspace{0.4em}
  {\normalfont\small
   School of Electrical Engineering, Computing and Mathematical Sciences,\par
   Curtin University, Perth, Australia\par
   \vspace{0.15em}
   \href{mailto:s.an@curtin.edu.au}{\texttt{s.an@curtin.edu.au}}\par}
  \endtrivlist
  \endgroup}
\makeatother

\newtheorem{theorem}{Theorem}[section]
\newtheorem{proposition}[theorem]{Proposition}
\newtheorem{lemma}[theorem]{Lemma}
\newtheorem{corollary}[theorem]{Corollary}

\theoremstyle{definition}

\newtheorem{remark}[theorem]{Remark}

\newcommand{\D}{\mathbb D}
\newcommand{\C}{\mathbb C}

\newcommand{\dd}{\,\mathrm d}
\newcommand{\e}{\mathrm e}
\newcommand{\etaStar}{\eta^{\ast}}
\newcommand{\betaHat}{\widehat\beta}

\title[Scalar optimization proof of Sendov's conjecture]{A Scalar Optimization Proof of Sendov's Conjecture with Reduced Computer Assistance}
\author{Senjian An}
\subjclass[2020]{Primary 30C10; Secondary 26C10, 26D15}
\keywords{Sendov's conjecture, polynomial critical points, scalar optimization, reduced computer assistance, convexity, log-concavity, exact certificate}
\date{}

\begin{document}

\begin{abstract}
Let $p$ be a complex polynomial of degree $n\ge2$ whose zeros lie in the closed unit disk.  Sendov's conjecture asserts that every zero of $p$ lies within distance one of a zero of $p'$.  Mazur's recent proof, and Tao's streamlined exposition of it, reduce a hypothetical counterexample to a scalar lower bound
\[
  1\le F(\eta,\alpha,n)
\]
together with two upper bounds for the endpoint parameter $\eta$ and the uniform restriction $0<\alpha\le17$.

We complete this scalar reduction by a two-stage optimization argument.  First, $F$ is nondecreasing in $\eta$, so $\eta$ may be replaced by a piecewise polar envelope $\eta^{\ast}(\alpha)$.  Writing $s=(n-1)/2$ converts the degree to a half-integer variable and gives
\[
  F(\eta^{\ast}(\alpha),\alpha,n)
  =E(\alpha,s)+\kappa(\alpha,s)
   \int_0^1 t^3\widehat\beta(t;\alpha,s)^{s-3/2}\dd t.
\]
On each half-unit $\alpha$-slab, $E$ and $\kappa$ decrease with $\alpha$, whereas $\widehat\beta$ increases.  Convexity in $t$ reduces all but the first mesh interval to eight explicit nodal terms.  The first interval satisfies a uniform bound $3/200$.  Each nodal upper term is a strictly log-concave function of the half-integer $s$, so its global discrete maximum is certified by two adjacent ratio evaluations.  A fixed finite certificate over the $34$ half-unit slabs gives
\[
  F(\eta^{\ast}(\alpha),\alpha,n)<0.966537<\frac{97}{100}<1,
\]
contradicting the scalar lower bound.  The main contribution is a substantial reduction of the computer assistance required after the Mazur--Tao scalar reduction: all continuous optimization and the unbounded degree parameter are handled analytically, leaving only a small fixed collection of explicit one-variable inequalities.  These are certified by exact rational bounds, with an independent interval cross-check.  Thus the computer-assisted part is confined to a transparent finite certificate rather than a large formal or numerical search.
\end{abstract}

\maketitle
\markboth{Senjian An}{Scalar optimization proof of Sendov's conjecture}

\section{Introduction}\label{sec:introduction}

Sendov's conjecture states that if every zero of a complex polynomial lies in the closed unit disk, then each zero has a critical point within unit distance.  More precisely, if $p\in\C[z]$ has degree $n\ge2$, all zeros of $p$ lie in $\overline\D$, and $a$ is a zero of $p$, then there is a zero $w$ of $p'$ with $|w-a|\le1$.

The conjecture was proved in successively larger low-degree and asymptotic regimes; see, among others, Meir--Sharma~\cite{MeirSharma1969}, Brown--Xiang~\cite{BrownXiang1999}, D\'egot~\cite{Degot2014}, and Tao~\cite{Tao2022}.  In 2026 Mazur presented a computer-assisted all-degree proof~\cite{Mazur2026}.  Tao subsequently gave a streamlined exposition and a separate Lean formalization of the underlying argument~\cite{TaoDigestion2026,TaoLean2026}.  The decisive feature of the Mazur--Tao approach is that a counterexample is reduced to a small scalar optimization problem.

The proof boundary is as follows.  The polar and origin inequalities in Section~\ref{sec:reduction}, together with the bound $0<\alpha\le17$, are imported from the Mazur--Tao argument.  The contribution of the present paper begins with the explicit scalar function $F(\eta,\alpha,n)$ and gives a new optimization argument designed specifically to reduce the amount of computer assistance needed to complete the proof.  The continuous parameter reductions, monotonicity arguments, convexity estimates, and the unbounded degree optimization are all handled analytically.  Only a fixed finite family of elementary one-variable inequalities remains for exact certification.

The new completion has four reusable ingredients:
\begin{enumerate}[label=(\roman*),leftmargin=2em]
\item monotonicity of the scalar objective in the endpoint parameter $\eta$;
\item a reparameterization $s=(n-1)/2$ that absorbs the degree into one half-integer variable;
\item monotonicity in $\alpha$ and convexity in the integration variable $t$;
\item strict log-concavity in $s$, reducing every remaining maximum to two adjacent ratio checks and one function evaluation.
\end{enumerate}
No stationary-point analysis in a two-dimensional parameter space is required.  The analytic reductions are proved in the text, and strict log-concavity removes the need for an unbounded search over the degree.  The remaining computer assistance is therefore deliberately small: it consists only of directed evaluations of a fixed finite list of explicit one-variable functions.  The corresponding exact certificate and independent checking scripts are included with the source.  In this sense, the novelty of the present proof is not the introduction of further computation, but the reduction of the computational burden to a compact and auditable certificate.

\paragraph{Main contribution.}
Relative to the existing computer- and Lean-assisted proofs, the present argument shifts the proof burden from formal or large-scale computational verification to elementary scalar analysis.  After the Mazur--Tao reduction, every continuous optimization is resolved analytically, and the only residual computer-assisted step is the exact verification of a small fixed certificate.  This reduced-certificate structure makes the proof easier to inspect and reproduce while confining all computer assistance to a fixed exact certificate.

\begin{theorem}[Sendov's conjecture]\label{thm:sendov}
Let $p\in\C[z]$ have degree $n\ge2$, and suppose every zero of $p$ lies in $\overline\D$.  For every zero $a$ of $p$, there exists a zero $w$ of $p'$ such that $|w-a|\le1$.
\end{theorem}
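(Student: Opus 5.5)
We argue by contradiction, following the structure laid out in the abstract. Suppose $p$ is a counterexample: $\deg p=n\ge2$, all zeros in $\overline\D$, and some zero $a$ has no critical point in the closed unit disk around it. We first invoke the Mazur--Tao reduction (to be recorded in Section~\ref{sec:reduction}). It produces parameters $\eta$ and $0<\alpha\le17$ satisfying the scalar lower bound $1\le F(\eta,\alpha,n)$, together with the polar and origin upper bounds on $\eta$. Everything after this point is a purely real-variable problem. The goal is to show that $F(\eta,\alpha,n)<1$ for every admissible triple, which contradicts the lower bound and proves the theorem.

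The first step removes $\eta$. We will verify directly from the explicit formula that $F$ is nondecreasing in $\eta$, by differentiating under the integral sign, where each $\eta$-dependent factor is monotone. We may then replace $\eta$ by the pointwise minimum of the two imported upper bounds. This is the piecewise envelope $\etaStar(\alpha)$.

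Next we substitute $s=(n-1)/2$, which makes $s$ a half-integer with $s\ge1/2$. Algebraic rearrangement then puts $F(\etaStar(\alpha),\alpha,n)$ in the form
\[
E(\alpha,s)+\kappa(\alpha,s)\int_0^1 t^3\betaHat(t;\alpha,s)^{s-3/2}\dd t .
\]
We split $(0,17]$ into the $34$ slabs $[\alpha_0,\alpha_0+1/2]$. On each slab we prove that $E$ and $\kappa$ decrease in $\alpha$ while $\betaHat$ increases. This lets us bound everything above by evaluating $E,\kappa$ at the left endpoint and $\betaHat$ at the right endpoint, so $\alpha$ is eliminated.

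For the integral, we show that $t\mapsto t^3\betaHat^{\,s-3/2}$ is convex on $[0,1]$, or at least convex on each mesh interval away from $0$. The trapezoid bound on a fixed mesh then controls all intervals after the first by eight nodal values. The first interval, near $t=0$, is handled by a crude uniform estimate: $t^3\le$ its endpoint cube and $\betaHat\le1$, which should give at most $3/200$.

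The last step handles the unbounded degree, and this is where I expect the main difficulty. Each nodal upper term has the shape $g(s)=c(s)\,b^{\,s-3/2}$, possibly with $E$ folded in. The plan is to prove that $\log g$ is strictly concave in $s$, by computing second derivatives of the explicit factors such as $\kappa(\alpha,s)$, which involve ratios like $s/(s+1)$ and powers. Strict log-concavity means the ratio $g(s+1/2)/g(s)$ is strictly decreasing. The discrete maximum over half-integers is therefore located by finding the index $s_0$ where the ratio crosses $1$, which takes two adjacent ratio evaluations. The maximum then equals $g(s_0)$.

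Two difficulties arise here. First, the sum of several log-concave terms need not be log-concave. If summing the terms breaks log-concavity, I would bound each term by its own maximum separately, accepting some loss. If that loss is too large, I would instead bound the sum by a single log-concave majorant. Second, proving log-concavity uniformly in $\alpha$ across all slabs may require splitting small $s$ off for direct evaluation.

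Finally, we assemble the bounds. A finite exact rational certificate over the $34$ slabs gives $F<0.966537<97/100<1$, which is the desired contradiction.
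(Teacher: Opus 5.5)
Your outer architecture matches the paper: eliminating $\eta$ by monotonicity, substituting $s=(n-1)/2$, slab monotonicity, termwise strict log-concavity with two ratio checks, and your fallback of bounding each term by its own maximum. The integral step, however, has two genuine gaps.

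\textbf{The first interval.} Your crude estimate ($t^3\le(1/8)^3$, $\betaHat\le1$) gives only $\kappa(\alpha,s)/4096$, or $\kappa/16384$ if you integrate $t^3$. But $\kappa(\alpha,s)=\frac{(4s^2-1)(s-\alpha)^2}{2\alpha s}\etaStar(\alpha)$ grows like $s^3$. Near $\alpha=0$ one has $\kappa_0(12.5)=1300$, so even the sharper bound is about $0.08>3/200$, and both bounds are unbounded as $s\to\infty$. No bound uniform in the degree can come from discarding $\betaHat^{\,s-3/2}$ on $[0,1/8]$. The piece stays bounded only because $\betaHat^{\,s-3/2}$ decays like $\e^{-\lambda t}$ with $\lambda$ of order $s$, giving $\int_0^{1/8}t^3\e^{-\lambda t}\dd t=O(\lambda^{-4})$ against $\kappa=O(s^3)$. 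The paper makes this quantitative as follows. Since $\sqrt{\betaHat}$ is convex, it lies below its chord $\ell_0$ on $[0,1/8]$. Then $(1-z)^k\le\e^{-kz}$ reduces $T_0$ to $P\,\Psi(\lambda)$ with $\Psi(\lambda)=\int_0^1u^3\e^{-\lambda u}\dd u$. Finally, the prefactor is balanced against $\lambda^3$ using AM--GM, the bound $\lambda^3\Psi(\lambda)<8/9$, and a finite rational cover of $\lambda\in[1/4,23/5]$ (Appendix~\ref{app:T0}).

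\textbf{The remaining intervals.} Your claim that $t\mapsto t^3\betaHat^{\,s-3/2}$ is convex on the mesh intervals away from $0$ is false, so the trapezoid rule does not give an upper bound. The function is a unimodal bump: as $\alpha\to0$ it tends to $t^3(1-t)^{2s-3}$, which peaks at $t=3/(2s)$, and it is concave near its peak. For instance, with $\alpha=1/4$ and $s=13/2$, its value at $t=3/16$ is about $9.6\times10^{-4}$. The average of its values at $1/8$ and $1/4$ is only about $8.3\times10^{-4}$. The fix is to apply convexity only to $\betaHat^{\,p}$, $p=s-3/2$, which is convex because $\betaHat''=2(1-\alpha/s)>0$ and $p\ge1$. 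Bound $\betaHat^{\,p}$ by its chord on each $[t_i,t_{i+1}]$ and integrate $t^3$ times the chord exactly. This produces the non-trapezoidal weights $c_j$ of \eqref{eq:weights}.

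\textbf{Degree and slab constraints.} You need $s\ge5/2$ (i.e.\ $n\ge6$), not $s\ge1/2$. The scalar reduction is only available for $n\ge5$, the cases $n\le5$ must be taken from the classical low-degree results, and $p\ge1$ is required for the convexity above. You also need to record the constraint $\alpha<s$. It gives $\betaHat''>0$, and on each slab $(a,b]$ it gives $s\ge b$.
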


The classical low-degree arguments cover $n\le5$.  We therefore work with the Mazur--Tao scalar reduction for $n\ge6$.

\section{The Mazur--Tao scalar reduction}\label{sec:reduction}

We state the precise input from Tao's exposition~\cite{TaoDigestion2026}.  Suppose, for contradiction, that a counterexample exists.  The central and boundary cases $a=0$ and $|a|=1$ are already handled in the Mazur--Tao framework; after rotation we may therefore assume that the distinguished zero is real, with $0<a<1$.  If the critical points are $w_1,\ldots,w_{n-1}$, failure of Sendov's conclusion gives $|w_j-a|>1$; hence, with multiplicity,
\begin{equation}\label{eq:q-param}
 w_j=a-\frac1{q_j},\qquad 0<|q_j|<1,
 \qquad 1\le j\le n-1.
\end{equation}
Define
\begin{equation}\label{eq:x-def}
 x+i\xi=\frac1{n-1}\sum_{j=1}^{n-1}q_j
\end{equation}
and
\begin{equation}\label{eq:scalar-params}
 \alpha=\frac{n-1}{2}(1-a^2),\qquad
 \beta(t)=1-2atx+a^2t^2,\qquad
 \eta=\beta(1).
\end{equation}
The disk is convex, so $|x+i\xi|<1$.

\begin{proposition}[Simplified polar inequality; Tao, Proposition 10(iii)]\label{prop:polar}
For the quantities in \eqref{eq:x-def}--\eqref{eq:scalar-params},
\begin{equation}\label{eq:polar}
 0<\eta<\min\left\{
 \frac{\alpha}{\alpha+3},\,
 1-\frac{\log\alpha}{\alpha}
 \right\}.
\end{equation}
Moreover,
\[
 x>\frac a2>0.
\]
\end{proposition}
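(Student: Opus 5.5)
Proposition~\ref{prop:polar} is imported, so my plan is to reconstruct its proof along the lines of Tao's Proposition~10 rather than find a new route. Everything rests on factoring $p'$ in the $q$-parametrization \eqref{eq:q-param}. Since $z-w_j=q_j^{-1}\bigl(1+q_j(z-a)\bigr)$, we have
\[
 p'(z)=n\prod_{j}q_j^{-1}\prod_{j=1}^{n-1}\bigl(1+q_j(z-a)\bigr).
\]
Two further inputs follow from $p(a)=0$.

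\begin{itemize}
\item Writing $p(z)=(z-a)g(z)$ gives $p''(a)/p'(a)=2g'(a)/g(a)$. This is twice the sum of $1/(a-z_k)$ over the remaining $n-1$ zeros $z_k\in\overline\D$.
\item On the other hand, $p''(a)/p'(a)=\sum_j q_j=(n-1)(x+i\xi)$.
\end{itemize}

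\textbf{Step 1 (the bound $x>a/2$).} I would compare $\Re$ of the two expressions for $p''(a)/p'(a)$. The term $\Re\frac{1}{a-z}$ is not positive on all of $\overline\D$, so a pointwise bound cannot work. Instead I would combine this identity with the hypothesis $|q_j|<1$ and with the assumption that no critical point lies in the disk $|w-a|\le1$. The intended consequence is that the zeros $z_k$ cannot cluster near $1$, which forces the average $x$ above $a/2$. The expected mechanism is a second-moment (Newton-identity) relation between $\sum q_j^2$ and $\sum(a-z_k)^{-2}$, combined with $|q_j|<1$.

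\textbf{Step 2 (upper bounds for $\eta$).} Integrate $p'$ along the real segment $z=a(1-t)$, $t\in[0,1]$, from $a$ to the origin, and along the segment to the polar point $1/a$. Along the first segment the factors become $1-atq_j$. By AM--GM and the concavity of $\log$, $\prod_j|1-atq_j|$ is controlled by $\beta(t)^{(n-1)/2}$ up to the $\xi$-correction, since
\[
 |1-at(x+i\xi)|^2=\beta(t)+a^2t^2\xi^2 .
\]
With $\alpha=\tfrac{n-1}{2}(1-a^2)$, the exponent $(n-1)/2$ is exactly what turns the factor $a^{\,n-1}$ from $\prod q_j^{-1}$ and the reflection into $\e^{-\alpha}$-type factors. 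The zeros lie in $\overline\D$, so the reflection principle gives $|p(1/a)|\ge a^{-n}|p(0)|$. Comparing the integral expressions for $p(0)$ and $p(1/a)$ should then yield two inequalities.
\begin{itemize}
\item A crude first-order version should give $\eta(\alpha+3)<\alpha$.
\item A logarithmic version, obtained by bounding $\log\beta$ by its tangent at $t=1$, should give $\alpha(1-\eta)>\log\alpha$, i.e.\ $\eta<1-\tfrac{\log\alpha}{\alpha}$.
\end{itemize}
Positivity $\eta>0$ is immediate, because $\beta(1)=|1-a(x+i\xi)|^2-a^2\xi^2$ and $|x+i\xi|<1$ make $\beta(1)\ge(1-a)^2>0$ after the $\xi$ term is handled.

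\textbf{Main obstacle.} I expect Step~2 to be the hardest part. The difficulty is getting exactly the constants $3$ and $\log\alpha/\alpha$, which requires careful control of the discarded $\xi$-terms and of the boundary term at the polar point. If my direct attempt loses constants, I would simply cite Tao's Proposition~10(iii), as the paper does, since the statement is imported verbatim.
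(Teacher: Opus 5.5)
The paper gives no proof of this proposition. It imports it verbatim from Tao's Proposition~10(iii), adding only the observation $1-\eta=2a(x-a/2)$. Your fallback of citing Tao therefore matches the paper's treatment, but the reconstruction you sketch is not a proof and would fail as written.

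Step~1 is unnecessary. Since $a>0$, the bound $x>a/2$ is equivalent to $\eta<1$, which follows at once from $\eta<\alpha/(\alpha+3)<1$. So no second-moment argument is needed, and you never actually derive $x>a/2$ from the Newton identity anyway. Positivity of $\eta$ is also immediate, but not through your identity: $x\le|x+i\xi|<1$ and $a>0$ give $\eta=1-2ax+a^2>(1-a)^2>0$.

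The genuine gap is in Step~2, and it has two parts.

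\emph{First}, the displayed identity is wrong. In fact $|1-at(x+i\xi)|^2=\beta(t)-a^2t^2(1-x^2-\xi^2)$, not $\beta(t)+a^2t^2\xi^2$. More to the point, $\beta$ does not come from the averaged point $x+i\xi$. It comes from using $|q_j|<1$ factor by factor, $|1-atq_j|^2\le 1-2at\Re q_j+a^2t^2$. AM--GM then gives $\prod_j|1-atq_j|\le\beta(t)^{(n-1)/2}$ with no $\xi$-correction at all.

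\emph{Second}, and decisively, that estimate is only an \emph{upper} bound for $|p'|$ along the segment. To extract an upper bound on $\eta$ you need a \emph{lower} bound, or a real-part constraint, at some point where $p\neq0$, supplied by the location of the zeros. The one you invoke, $|p(1/a)|\ge a^{-n}|p(0)|$, does not follow from the zeros lying in $\overline{\D}$ and is false in general. For $p(z)=(z-\tfrac12)(z-1)$ and $a=\tfrac12$ one has $|p(2)|=\tfrac32<2=a^{-2}|p(0)|$. The correct reflection inequality is $|p(z)|\ge|z|^n|p(1/\bar z)|$ for $|z|\ge1$. At $z=1/a$ it yields only $|p(1/a)|\ge a^{-n}|p(a)|=0$, which is vacuous.

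With that input gone, nothing in the sketch produces the constant $3$ or the term $\log\alpha/\alpha$; both are only asserted with ``should give''. So the two upper bounds on $\eta$ are not established by your argument. As in the paper, the proposition rests entirely on the citation of Tao's Proposition~10(iii).
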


This is the ``Simplified polar inequality'' in Proposition 10(iii) of Tao's blog~\cite{TaoDigestion2026}.  The last assertion is equivalent to
\[
 1-\eta=2a\left(x-\frac a2\right)>0.
\]

From \eqref{eq:scalar-params},
\[
 a^2=1-\frac{2\alpha}{n-1},\qquad
 ax=1-\frac{\alpha}{n-1}-\frac\eta2.
\]
Thus
\begin{equation}\label{eq:beta-eta-alpha-n}
 \beta(t)=\beta_{\eta,\alpha,n}(t)
 :=1-2\left(1-\frac{\alpha}{n-1}-\frac\eta2\right)t
 +\left(1-\frac{2\alpha}{n-1}\right)t^2.
\end{equation}

\begin{proposition}[Alpha bound and origin inequality; Tao, Proposition 11(ii),(iii)]\label{prop:origin}
For $n\ge5$,
\[
 0<\alpha\le17,
\]
and
\begin{align}
1\le{}&
 \frac{\eta}{2\alpha(1-\eta)}
 +\frac{\eta}{4\alpha}
 +\frac1{2(n-1)}
 +\frac{\eta}{4\alpha(n-1)}\notag\\
&+\frac{\left(1-\frac{2\alpha}{n-1}\right)^2
 n(n-1)(n-2)\eta}{4\alpha}
 \int_0^1 t^3
 \beta_{\eta,\alpha,n}(t)^{(n-4)/2}\dd t\notag\\
=:{}&F(\eta,\alpha,n).
\label{eq:origin}
\end{align}
\end{proposition}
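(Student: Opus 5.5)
This proposition is imported from Tao's exposition, and the plan is to reproduce its two parts separately rather than derive anything new. \emph{The bound $0<\alpha\le17$} comes from Proposition~\ref{prop:polar}. Since $\eta>0$ and $\eta<\alpha/(\alpha+3)$, while Tao's argument also supplies a lower bound on $\eta$ in terms of $\alpha$, these are incompatible once $\alpha$ is large. So first I would write down that lower bound, which comes from the same polar computation: evaluate $p'/p$ near $a$ in the variables $q_j$, and use $x>a/2$. Then I would solve the resulting one-variable inequality against $1-\log\alpha/\alpha$ and $\alpha/(\alpha+3)$, and check that it fails for $\alpha>17$. Positivity of $\alpha$ is immediate from $0<a<1$.

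\emph{The origin inequality} compares $p$ at $0$ and at $a$. Since $p(a)=0$, we have $p(0)=-\int_0^a p'(u)\dd u$. Write $p'(z)=nc\prod_j(z-w_j)$ and substitute $w_j=a-1/q_j$, so that $p'(a(1-t))=nc\prod_j q_j^{-1}\prod_j(1-atq_j)$. On the other side, all zeros of $p$ lie in $\overline\D$, so $|p(0)|\le|c|$. Moreover $a$ is a zero of $p$, so $p(z)=(z-a)r(z)$, which lets the constant term be compared with the value of the product $\prod_j(1-atq_j)$ at $t=1$. Combining these gives an inequality between $1$ and a normalized integral $\int_0^1|\prod_j(1-atq_j)|\dd t$ divided by $|\prod_j(1-aq_j)|$, up to boundary terms.

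The key analytic step is to bound $\prod_j|1-atq_j|^2$ by $\beta(t)^{n-1}$ times correction factors. Here I would use the inequality of arithmetic and geometric means (or concavity of $\log$) on $|1-atq_j|^2=1-2at\,\mathrm{Re}\,q_j+a^2t^2|q_j|^2$, together with $|q_j|<1$ and the mean \eqref{eq:x-def}. This gives $\prod_j|1-atq_j|^2\le\beta(t)^{n-1}$, with $\beta$ as in \eqref{eq:scalar-params}. The lower-order terms then come from a Taylor expansion, to third order in $t$ near $t=1$, of the exact identity relating the product to $p$. That expansion is what should produce the terms $\eta/(2\alpha(1-\eta))$, $\eta/(4\alpha)$, $1/(2(n-1))$ and $\eta/(4\alpha(n-1))$. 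It should also produce the remainder, which carries the weight $n(n-1)(n-2)t^3$, the factor $(a^2)^2=(1-2\alpha/(n-1))^2$, and the exponent $(n-4)/2$, the latter because three derivatives have been removed from the $(n-1)/2$ power. Finally I would rewrite $a^2$ and $ax$ via \eqref{eq:beta-eta-alpha-n} to put the bound in the form $F(\eta,\alpha,n)$.

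I expect the main obstacle to be this last bookkeeping step. The exact constants in the boundary terms must come out as stated, and this requires tracking where $1-\eta=2a(x-a/2)$ enters, since that is the denominator in the first term. Here I would follow Tao's Proposition~11 step by step rather than re-derive the constants independently.
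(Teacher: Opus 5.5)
The paper gives no argument for this proposition. It is the imported input that marks the boundary of the proof, and its only justification is the citation of Tao's Proposition~11(ii),(iii). Your closing fallback (``follow Tao's Proposition~11 step by step'') amounts to the same citation. The reconstruction you sketch in between, however, does not hold up, so it should not be presented as a derivation.

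\emph{The bound $\alpha\le17$.} Proposition~\ref{prop:polar} bounds $\eta$ only from above, by $\min\{\alpha/(\alpha+3),\,1-\log\alpha/\alpha\}$. This minimum is positive for every $\alpha>0$ and tends to $1$, so on its own the proposition excludes no value of $\alpha$. Everything therefore rests on the ``lower bound on $\eta$ in terms of $\alpha$'' that you never state. None follows from the data you invoke: $|x+i\xi|<1$ gives only $\eta=1-2ax+a^2>(1-a)^2$. At fixed $\alpha$ one has $a^2=1-2\alpha/(n-1)\to1$, so this lower bound tends to $0$ as $n\to\infty$. The $\alpha$-bound needs input that is absent from your sketch.

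\emph{The origin inequality.} The pointwise AM--GM bound $\prod_j|1-atq_j|^2\le\beta(t)^{n-1}$ is correct, but it cannot produce the last term of $F$.
\begin{enumerate}[label=(\roman*),leftmargin=2em]
\item By your own description, that term bounds the remainder of a third-order expansion. It therefore involves derivatives of $G(t)=\prod_j(1-atq_j)$, for instance $G'''=6\sum_{|S|=3}\prod_{j\in S}(-aq_j)\prod_{j\notin S}(1-atq_j)$.
\item A pointwise majorant of $|G|$ gives no control of $|G'''|$. Moreover, $\frac{\mathrm d^3}{\mathrm dt^3}\beta^{(n-1)/2}$ involves $\beta^{(n-5)/2}$ and $\beta^{(n-7)/2}$, not $\beta^{(n-4)/2}$. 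So ``removing three derivatives'' does not explain the exponent.
\item If you instead apply AM--GM to each $(n-4)$-fold product, you get $(1-2atx_S+a^2t^2)^{(n-4)/2}$, where $x_S$ is the mean of $\mathrm{Re}\,q_j$ over $j\notin S$. The average of $x_S$ over $S$ is indeed $x$. But $u\mapsto u^{(n-4)/2}$ is convex for $n\ge6$, so averaging gives a quantity \emph{at least} $\beta(t)^{(n-4)/2}$. Jensen goes the wrong way, and an additional idea is needed to reach the global $\beta$.
\item Your starting comparison fixes the direction of the inequality. Comparing $p(0)=-a\,p'(a)\int_0^1G(t)\dd t$ with $|p(0)|=|c|\,a\prod_{k\ge2}|z_k|$ gives $n\bigl|\int_0^1G(t)\dd t\bigr|=\prod_j|q_j|\prod_{k\ge2}|z_k|<1$. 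This is an \emph{upper} bound on the integral. To reach $1\le F$ along this route, you need a lower bound on $n\bigl|\int_0^1G\bigr|$ with an explicitly controlled error. That is exactly the step you leave as ``should produce''.
\end{enumerate}

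Either cite the proposition, as the paper does, or actually carry out these steps.
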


The bound on $\alpha$ is Proposition 11(ii), and the inequality is Proposition 11(iii), of Tao's exposition~\cite{TaoDigestion2026}.  Everything below concerns only the explicit scalar function in \eqref{eq:origin}.

\section{The scalar optimization problem}\label{sec:optimization}

Put
\[
 A=1-\frac{2\alpha}{n-1}=a^2,
 \qquad
 c=1-\frac{\alpha}{n-1}-\frac\eta2=ax.
\]
The original geometry gives
\[
 0<A<1,
 \qquad 0<c\le\sqrt A,
\]
and therefore
\begin{equation}\label{eq:beta-positive}
 \beta_{\eta,\alpha,n}(t)
 =(1-ct)^2+t^2(A-c^2)>0,
 \qquad 0\le t\le1.
\end{equation}
Every counterexample gives $n\ge6$ and an admissible pair $(\alpha,\eta)$ satisfying \eqref{eq:polar}, $0<\alpha\le17$, and $1\le F(\eta,\alpha,n)$.

The first observation allows the endpoint variable to be eliminated.

\begin{lemma}[Monotonicity in $\eta$]\label{lem:Feta}
For fixed admissible $\alpha$ and $n$,
\[
 \frac{\partial}{\partial\eta}F(\eta,\alpha,n)\ge0.
\]
\end{lemma}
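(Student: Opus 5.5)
Fix admissible $\alpha$ and $n\ge6$, and differentiate $F(\eta,\alpha,n)$ term by term. The plan is to show that each summand of \eqref{eq:origin} is a nondecreasing function of $\eta$ on the admissible range $0<\eta<1$. Then $\partial F/\partial\eta$ is a sum of nonnegative quantities.

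The three elementary terms are immediate. The term $\eta/(2\alpha(1-\eta))$ has derivative $1/(2\alpha(1-\eta)^2)>0$, because $\eta<1$ by \eqref{eq:polar}. The terms $\eta/(4\alpha)$ and $\eta/(4\alpha(n-1))$ are linear in $\eta$ with positive slope, since $\alpha>0$. The term $1/(2(n-1))$ does not depend on $\eta$.

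The integral term is the only substantive one. It equals a positive constant $\frac{A^2 n(n-1)(n-2)}{4\alpha}$ times $\eta\, I(\eta)$, where
\[
 I(\eta)=\int_0^1 t^3\beta_{\eta,\alpha,n}(t)^{(n-4)/2}\dd t .
\]
The constant is positive because $A=1-2\alpha/(n-1)\in(0,1)$ and $n\ge3$.

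From \eqref{eq:beta-eta-alpha-n}, $\eta$ enters $\beta$ only through the linear coefficient, and
\[
 \frac{\partial\beta}{\partial\eta}=t\ge0 .
\]
By \eqref{eq:beta-positive}, $\beta>0$ on $[0,1]$ throughout the admissible region. The exponent $(n-4)/2$ is positive for $n\ge6$. Hence $\beta^{(n-4)/2}$ is smooth in $\eta$, with
\[
 \frac{\partial}{\partial\eta}\beta^{(n-4)/2}=\frac{n-4}{2}\,t\,\beta^{(n-6)/2}\ge0 .
\]
The integrand and its $\eta$-derivative are continuous on the compact interval $[0,1]$, so we may differentiate under the integral sign. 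This gives $I'(\eta)\ge0$, and clearly $I(\eta)>0$. Therefore
\[
 \frac{d}{d\eta}\bigl(\eta I(\eta)\bigr)=I(\eta)+\eta I'(\eta)>0 .
\]

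The main point needing care is the admissible domain. Positivity of $\beta$ for every $\eta$ considered is what justifies both the real power and the differentiation under the integral. For that I would invoke \eqref{eq:beta-positive}, which rests on $0<c\le\sqrt A$.

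If the later argument replaces $\eta$ by an envelope $\eta^\ast(\alpha)$ lying outside the geometrically realized values, the same computation must be shown to persist there. I would handle this by restricting to the range $0<\eta<1$ with $c=1-\alpha/(n-1)-\eta/2>0$. On that range $\beta(t)\ge 1-2ct+At^2$ remains positive, because its discriminant condition $c^2\le A$ holds. Indeed $c^2\le A$ is equivalent to
\[
 \Bigl(1-\frac{\alpha}{n-1}-\frac\eta2\Bigr)^2\le 1-\frac{2\alpha}{n-1},
\]
which holds automatically when $\eta\ge0$ and $\alpha/(n-1)<1/2$. The left side is at most $(1-\alpha/(n-1))^2=A+\alpha^2/(n-1)^2$, so this may need the extra term $\eta$. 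I would verify it using $\eta$ bounded below by the geometry, or otherwise simply restrict the lemma to the geometric range as stated.

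With positivity in hand, the term-by-term monotonicity yields $\partial F/\partial\eta\ge0$.
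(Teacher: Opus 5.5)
Your proof is correct and is essentially the paper's: you differentiate term by term and handle the integral term via $\partial_\eta\beta=t\ge0$ and differentiation under the integral sign (the paper writes the result as $\int_0^1 t^3\beta^{p-1}(\beta+\eta p t)\dd t\ge0$, which is your $I+\eta I'$). One correction to your last paragraph: $c^2\le A$ is not automatic (it fails for $\eta$ near $0$), but you do not need it, because $\beta(t)=(1-t)^2+\eta t+\frac{2\alpha}{n-1}t(1-t)>0$ on $[0,1]$ for every $\eta>0$; equivalently, $\partial_\eta\beta=t\ge0$ lets positivity persist from the geometric $\eta$ up to $\eta^{\ast}$, which is what the paper's separate feasibility lemma records.
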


The proof is given in Appendix~\ref{app:eta}.  The key identity is
\[
 \frac{\partial}{\partial\eta}\beta_{\eta,\alpha,n}(t)=t\ge0,
\]
and the appendix differentiates the complete integral factor rather than only the quadratic.

Define the polar envelope
\begin{equation}\label{eq:eta-star}
 \etaStar(\alpha)=
 \begin{cases}
 \dfrac{\alpha}{\alpha+3},&0<\alpha\le11,\\[2mm]
 1-\dfrac{\log\alpha}{\alpha},&11<\alpha\le17.
 \end{cases}
\end{equation}
By Proposition~\ref{prop:polar}, $0<\eta<\etaStar(\alpha)<1$.

\begin{lemma}[Preservation of feasibility]\label{lem:feasibility}
Let
\[
 c_0=1-\frac{\alpha}{n-1}-\frac\eta2,
 \qquad
 c^{\ast}=1-\frac{\alpha}{n-1}-\frac{\etaStar(\alpha)}2.
\]
Then
\[
 0<c^{\ast}\le c_0\le\sqrt A,
 \qquad (c^{\ast})^2\le A.
\]
\end{lemma}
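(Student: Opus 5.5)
The plan is to prove the three inequalities $c^{\ast}\le c_0$, $c_0\le\sqrt A$ and $c^{\ast}>0$ separately. The bound $(c^{\ast})^2\le A$ then follows by squaring the nonnegative chain $0<c^\ast\le c_0\le\sqrt A$.

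First, $c^{\ast}\le c_0$ is equivalent to $\eta\le\etaStar(\alpha)$. This holds strictly by Proposition~\ref{prop:polar}: for every admissible $\alpha$ the envelope in \eqref{eq:eta-star} is one of the two terms of the minimum in \eqref{eq:polar}, so it is at least that minimum. Second, $c_0\le\sqrt A$ is geometric. By construction $c_0=ax$ and $A=a^2$ with $0<a<1$, and $|x|\le|x+i\xi|<1$ by convexity of the disk. Hence $c_0=ax\le a=\sqrt A$. We also get $c_0>0$ from $x>a/2>0$ in Proposition~\ref{prop:polar}, although this is not needed for the ordering.

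The main step, and the only genuine obstacle, is $c^{\ast}>0$, i.e.
\[
 \frac{\alpha}{n-1}+\frac{\etaStar(\alpha)}{2}<1 .
\]
Since $\etaStar(\alpha)<1$, we have $\etaStar/2<1/2$, so it suffices to show $\alpha/(n-1)\le 1/2$. This follows from $A=1-2\alpha/(n-1)=a^2>0$, which gives $\alpha/(n-1)<1/2$. Therefore
\[
 c^{\ast}=1-\frac{\alpha}{n-1}-\frac{\etaStar}{2}>1-\frac12-\frac12=0 .
\]

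It remains to justify $\etaStar(\alpha)<1$. For $\alpha\le11$ we have $\alpha/(\alpha+3)<1$. For $11<\alpha\le17$ we have $\log\alpha>0$, so $1-\log\alpha/\alpha<1$; this branch is also positive, since $\log\alpha<\alpha$. With $c^{\ast}>0$ established, the chain is nonnegative and squaring finishes the proof.
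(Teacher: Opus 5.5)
Your proof is correct and follows essentially the same route as the paper: $c^{\ast}\le c_0$ comes from $\eta<\etaStar(\alpha)$, $c_0=ax\le a=\sqrt A$ comes from the original geometry, $c^{\ast}>0$ comes from $\etaStar<1$ and $A>0$, and squaring finishes. Your bound $c^{\ast}>1-\tfrac12-\tfrac12$ is just the paper's identity $c^{\ast}=(1+A-\etaStar)/2>A/2>0$ rearranged, and you additionally spell out why $c_0\le\sqrt A$ and $\etaStar<1$ hold, which the paper takes directly from the geometric setup and Proposition~\ref{prop:polar}.
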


\begin{proof}
Since $\eta\le\etaStar$, one has $c^{\ast}\le c_0\le\sqrt A$.  Moreover,
\[
 c^{\ast}
 =\frac{1+A-\etaStar}{2}
 >\frac A2>0
\]
because $\etaStar<1$.  Since $c^{\ast}>0$ and $c^{\ast}\le\sqrt A$, squaring gives $(c^{\ast})^2\le A$.  This proves the claim.
\end{proof}

In particular, $0<c^{\ast}\le\sqrt A<1$.  Hence for $0\le t\le1$,
\[
 1-c^{\ast}t\ge1-c^{\ast}>0,
\]
and therefore
\[
 \beta_{\etaStar(\alpha),\alpha,n}(t)
 =(1-c^{\ast}t)^2+t^2\bigl(A-(c^{\ast})^2\bigr)>0.
\]
This remains strict even in the limiting case $A=(c^{\ast})^2$.

Lemmas~\ref{lem:Feta} and~\ref{lem:feasibility} justify the replacement $\eta\mapsto\etaStar(\alpha)$.  We now absorb the degree into
\begin{equation}\label{eq:s-def}
 s=\frac{n-1}{2}
 \in\left\{\frac52,3,\frac72,\ldots\right\},
 \qquad
 p=s-\frac32,
 \qquad
 \alpha<s.
\end{equation}
The last inequality is equivalent to $a^2=1-\alpha/s>0$.

After the substitution,
\begin{equation}\label{eq:beta-hat}
 \betaHat(t;\alpha,s)
 =(1-t)^2+\etaStar(\alpha)t
 +\frac{\alpha}{s}t(1-t),
\end{equation}
\begin{equation}\label{eq:E}
 E(\alpha,s)=
 \frac{\etaStar}{2\alpha(1-\etaStar)}
 +\frac{\etaStar}{4\alpha}
 +\frac1{4s}
 +\frac{\etaStar}{8\alpha s},
\end{equation}
and
\begin{equation}\label{eq:kappa}
 \kappa(\alpha,s)=
 \frac{(4s^2-1)(s-\alpha)^2}{2\alpha s}\,\etaStar(\alpha).
\end{equation}
Consequently
\begin{equation}\label{eq:Fstar}
 F(\etaStar(\alpha),\alpha,n)
 =E(\alpha,s)+\kappa(\alpha,s)
 \int_0^1 t^3\betaHat(t;\alpha,s)^p\dd t.
\end{equation}
The proof of Sendov's conjecture is therefore reduced to proving that the right-hand side of \eqref{eq:Fstar} is uniformly smaller than one on
\[
 0<\alpha\le17,
 \quad
 s\in\tfrac12\mathbb Z,
 \quad
 s\ge\tfrac52,
 \quad
 \alpha<s.
\]

\section{Monotonicity on half-unit alpha slabs}\label{sec:slabs}

Partition $(0,17]$ into the $34$ half-unit slabs
\begin{equation}\label{eq:slabs}
 I_m=\left(\frac m2,\frac{m+1}{2}\right],
 \qquad m=0,\ldots,33.
\end{equation}
Write $I_m=(a,b]$.  Since $b$ and $s$ are half-integers and $\alpha<s$, every feasible point in the slab satisfies
\begin{equation}\label{eq:s-ge-b}
 s\ge b.
\end{equation}
The interval $(10.5,11]$ uses the rational branch of \eqref{eq:eta-star}, whereas $(11,11.5]$ uses the logarithmic branch; no slab crosses the change of formula.

\begin{lemma}[Monotonicity of the three factors]\label{lem:slab-monotone}
For fixed $s$ and $t\in[0,1]$, on either branch of \eqref{eq:eta-star},
\[
 \partial_\alpha\betaHat(t;\alpha,s)\ge0,
 \qquad
 \partial_\alpha E(\alpha,s)<0,
 \qquad
 \partial_\alpha\kappa(\alpha,s)<0.
\]
The first inequality is strict whenever $t>0$.
\end{lemma}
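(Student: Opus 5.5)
Fix $s$ and $t\in[0,1]$ and treat each branch of \eqref{eq:eta-star} separately, so that $\etaStar$ is a smooth function of $\alpha$ on the relevant range. The first step is to record $\etaStar$ and its derivative on each branch:
\[
 \etaStar=\frac{\alpha}{\alpha+3},\qquad (\etaStar)'=\frac{3}{(\alpha+3)^2}>0
\]
for $0<\alpha\le11$, and
\[
 \etaStar=1-\frac{\log\alpha}{\alpha},\qquad (\etaStar)'=\frac{\log\alpha-1}{\alpha^2}>0
\]
for $11<\alpha\le17$, since $\log\alpha>1$ there. The quantity that actually drives $E$ is $\etaStar/\alpha$. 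On the rational branch $\etaStar/\alpha=1/(\alpha+3)$, which is decreasing. On the logarithmic branch
\[
 \frac{\etaStar}{\alpha}=\frac1\alpha-\frac{\log\alpha}{\alpha^2},
 \qquad
 \frac{\dd}{\dd\alpha}\frac{\etaStar}{\alpha}=\frac{2\log\alpha-1-\alpha}{\alpha^3},
\]
and this is negative because $2\log\alpha<\alpha+1$ for all $\alpha>0$.

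For $\betaHat$, differentiating \eqref{eq:beta-hat} gives
\[
 \partial_\alpha\betaHat=(\etaStar)'\,t+\frac{t(1-t)}{s}.
\]
Both terms are nonnegative, and the first is strictly positive when $t>0$. This settles the first claim, including its strict form.

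For $E$, I would split \eqref{eq:E} term by term. The summand $1/(4s)$ is constant in $\alpha$. The summands $\etaStar/(4\alpha)$ and $\etaStar/(8\alpha s)$ are positive multiples of $\etaStar/\alpha$, so they are decreasing by the first step. The remaining summand is $\etaStar/\bigl(2\alpha(1-\etaStar)\bigr)$. On the rational branch $1-\etaStar=3/(\alpha+3)$, so this summand equals $1/6$, a constant. On the logarithmic branch $\alpha(1-\etaStar)=\log\alpha$, so the summand equals $\etaStar/(2\log\alpha)$, and I need $\etaStar/\log\alpha=1/\log\alpha-1/\alpha$ to be decreasing. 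Its derivative is
\[
 \frac{-1}{\alpha(\log\alpha)^2}+\frac1{\alpha^2}
 =\frac{(\log\alpha)^2-\alpha}{\alpha^2(\log\alpha)^2},
\]
which is negative because $(\log\alpha)^2<\alpha$ (for instance $\log 17<3<\sqrt{11}$). In every case the sum is strictly decreasing, so $\partial_\alpha E<0$.

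For $\kappa$, write \eqref{eq:kappa} as
\[
 \kappa=\frac{4s^2-1}{2s}\,(s-\alpha)^2\,\frac{\etaStar}{\alpha}.
\]
The prefactor $(4s^2-1)/(2s)$ is positive and independent of $\alpha$. Each of $(s-\alpha)^2$ and $\etaStar/\alpha$ is positive and strictly decreasing in $\alpha$; for $(s-\alpha)^2$ this uses $\alpha<s$. The product of two positive strictly decreasing factors is strictly decreasing, so $\partial_\alpha\kappa<0$.

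The only step requiring any care is the logarithmic branch of $E$: the two elementary inequalities $(\log\alpha)^2<\alpha$ and $2\log\alpha<1+\alpha$, which are needed only on $(11,17]$. I expect no real obstacle beyond checking these.
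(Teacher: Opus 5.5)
Your proof is correct and follows the paper's argument essentially step for step. Like the paper, you differentiate $\widehat\beta$ branchwise and write $\kappa$ as a positive constant times the two positive decreasing factors $(s-\alpha)^2$ and $\eta^{\ast}/\alpha$. You also rely on the same facts $\log\alpha<3$ and $\alpha>11$ on the logarithmic branch.

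The only difference is minor and concerns $E$ on the logarithmic branch. You show each summand is monotone separately, using $(\log\alpha)^2<\alpha$ and $2\log\alpha<\alpha+1$. The paper instead merges the first two summands into $\frac1{2L}-\frac1{4\alpha}-\frac{L}{4\alpha^2}$ and bounds that combined derivative numerically via $16/11<22/9$. Your grouping is slightly cleaner.
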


\begin{proof}
On $0<\alpha\le11$,
\[
 \etaStar=\frac{\alpha}{\alpha+3},
\]
so
\[
 \partial_\alpha\betaHat
 =\frac{3t}{(\alpha+3)^2}+\frac{t(1-t)}s\ge0.
\]
Equality occurs only at $t=0$.
\[
 E=\frac16+\frac1{4(\alpha+3)}
 +\frac1{4s}+\frac1{8s(\alpha+3)},
\]
and
\[
 \kappa
 =\frac{(4s^2-1)(s-\alpha)^2}{2s(\alpha+3)}.
\]
The last two expressions are strictly decreasing in $\alpha$.

On $11<\alpha\le17$, write $L=\log\alpha$.  Then
\[
 \etaStar=1-\frac L\alpha,
 \qquad
 \partial_\alpha\betaHat
 =\frac{t(L-1)}{\alpha^2}+\frac{t(1-t)}s\ge0.
\]
Again equality occurs only at $t=0$, because $L=\log\alpha>1$ for $\alpha>11$.
Furthermore
\[
 \frac{\etaStar}{\alpha}
 =\frac{\alpha-L}{\alpha^2},
 \qquad
 \left(\frac{\etaStar}{\alpha}\right)'
 =-\frac{\alpha+1-2L}{\alpha^3}<0.
\]
Indeed, $L<3$ and $\alpha>11$, so
\[
 \alpha+1-2L>12-6=6>0.
\]
Both $(s-\alpha)^2$ and $\etaStar/\alpha$ are positive and decreasing, so $\kappa$ decreases.

Finally,
\[
 E=
 \frac1{2L}-\frac1{4\alpha}-\frac{L}{4\alpha^2}
 +\frac1{4s}+\frac1{8\alpha s}-\frac{L}{8\alpha^2s}.
\]
The derivative of the first three terms is
\[
 \frac1{4\alpha^2}
 \left(1+\frac{2L-1}{\alpha}-\frac{2\alpha}{L^2}\right).
\]
Since $L<3$ and $\alpha>11$,
\[
 1+\frac{2L-1}{\alpha}
 <1+\frac5{11}=\frac{16}{11},
 \qquad
 \frac{2\alpha}{L^2}>\frac{22}{9},
\]
and $16/11<22/9$; hence this derivative is strictly negative.  The derivative of the last two $\alpha$-dependent terms is
\[
 -\frac{\alpha-2L+1}{8s\alpha^3}<0,
\]
because $\alpha-2L+1>6$.
\end{proof}

For a slab $(a,b]$, let $E_a(s)$ and $\kappa_a(s)$ denote the right limits at $a$ on the branch belonging to the slab.  At $a=0$ these limits are
\begin{equation}\label{eq:a0-limits}
 E_0(s)=\frac14+\frac7{24s},
 \qquad
 \kappa_0(s)=\frac{(4s^2-1)s}{6}.
\end{equation}
Lemma~\ref{lem:slab-monotone} gives
\begin{equation}\label{eq:slab-majorization}
 E(\alpha,s)\le E_a(s),
 \qquad
 \kappa(\alpha,s)\le\kappa_a(s),
 \qquad
 \betaHat(t;\alpha,s)\le\betaHat(t;b,s).
\end{equation}

\section{Decomposition of the integral}\label{sec:decomposition}

Let
\[
 t_j=\frac j8,
 \qquad j=0,\ldots,8.
\]
Because $\alpha<s$,
\[
 \frac{\partial^2\betaHat}{\partial t^2}
 =2\left(1-\frac\alpha s\right)>0.
\]
Moreover $p=s-3/2\ge1$, so
\begin{equation}\label{eq:power-convex}
 \frac{\dd^2}{\dd t^2}\betaHat(t)^p
 =p\betaHat^{p-2}
 \left((p-1)(\betaHat')^2+\betaHat\betaHat''\right)\ge0.
\end{equation}
Thus $t\mapsto\betaHat(t)^p$ is convex.

\subsection{The first interval}

The interval $[0,1/8]$ is treated by retaining the full power of the chord of $\sqrt{\betaHat}$.  Since
\[
 \frac{\dd^2}{\dd t^2}\sqrt{\betaHat(t)}
 =\frac{A-(c^{\ast})^2}{\betaHat(t)^{3/2}}\ge0,
\]
its graph lies below the chord joining $t=0$ and $t=1/8$.  Put
\[
 r_1=\sqrt{\betaHat(1/8)},
 \qquad
 \ell_0(t)=1-8(1-r_1)t,
 \qquad 0\le t\le\frac18.
\]
Then $\sqrt{\betaHat(t)}\le\ell_0(t)$ and, because $2p=2s-3$,
\[
 \kappa(\alpha,s)\int_0^{1/8}t^3\betaHat(t)^p\dd t
 \le T_0(\alpha,s),
\]
where the first-interval chord majorant is defined by
\begin{equation}\label{eq:T0-def}
 T_0(\alpha,s)
 :=\kappa(\alpha,s)\int_0^{1/8}t^3\ell_0(t)^{2s-3}\dd t.
\end{equation}

\begin{lemma}[First-interval estimate]\label{lem:T0}
For every admissible $\alpha$ and $s$,
\[
 T_0(\alpha,s)<\frac3{200}.
\]
\end{lemma}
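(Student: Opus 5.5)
The plan is to replace the truncated integral in \eqref{eq:T0-def} by a complete Beta integral, which turns $T_0$ into an explicit rational expression in $s$ and $u:=8(1-r_1)$. Since $\ell_0$ is linear and decreasing, with $\ell_0(1/8)=r_1>0$, its zero $1/u$ lies to the right of $1/8$. Hence, with $N=2s-3\ge2$,
\[
 \int_0^{1/8}t^3\ell_0(t)^{N}\dd t\le\int_0^{1/u}t^3(1-ut)^{N}\dd t
 =\frac{6}{u^4(N+1)(N+2)(N+3)(N+4)} .
\]
Substituting $N=2s-3$ and \eqref{eq:kappa} gives
\[
 T_0(\alpha,s)\le
 \frac{6\,(4s^2-1)(s-\alpha)^2\,\etaStar(\alpha)}{2\alpha s\,u^4\,(2s-2)(2s-1)(2s)(2s+1)}
 =\frac{3(s-\alpha)^2\,\etaStar(\alpha)}{4\alpha s^2(s-1)\,u^4}.
\]

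The second step is a lower bound for $u$ in terms of the slab data. From \eqref{eq:beta-hat},
\[
 \betaHat(1/8)=\frac{49}{64}+\frac{\etaStar}{8}+\frac{7\alpha}{64s},
 \qquad
 1-\betaHat(1/8)=\frac{7}{64}\Bigl(1-\frac\alpha s\Bigr)+\frac{1-\etaStar}{8}.
\]
Because $0<r_1<1$, we have $1-r_1=(1-r_1^2)/(1+r_1)\ge(1-\betaHat(1/8))/2$. Therefore
\[
 u\ge 4\bigl(1-\betaHat(1/8)\bigr)\ge\frac{7(s-\alpha)}{16s}+\frac{1-\etaStar}{2}.
\]
The term $\tfrac{7(s-\alpha)}{16s}$ is used in the regime where $s-\alpha$ is comparable to $s$. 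In that regime the majorant is of order
\[
 \frac{s}{\alpha(s-1)(s-\alpha)^2}\Bigl(\frac{16}{7}\Bigr)^4\etaStar .
\]
This is small once $s$ is moderately large or $\alpha$ is not tiny. For $\alpha\to0$ I would instead use $\etaStar/\alpha\le 1/3$ (rational branch) together with $u\ge1-O(\alpha/s)$, which gives $T_0=O(1/s)$. I would also not discard the exact factor $(1+r_1)$ when sharper constants are needed.

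The main obstacle is the regime $\alpha$ close to $s$, that is, small $s-\alpha$ with $s$ small, where $u$ is not bounded below by the first term. There I would keep both terms of the lower bound for $u$ and balance them against the factor $(s-\alpha)^2$ in the numerator. Writing $d=s-\alpha\ge0$, the majorant is at most a constant times
\[
 \frac{d^2}{\bigl(\tfrac{7d}{16s}+\tfrac{1-\etaStar}{2}\bigr)^4}.
\]
This is maximised in $d$ at a point of order $s(1-\etaStar)$, which yields a bound of order $s^2/(1-\etaStar)^2$ divided by $\alpha s^2(s-1)$. Here $1-\etaStar=3/(\alpha+3)$ or $\log\alpha/\alpha$ is explicitly bounded below, so the bound becomes explicit. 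If the constant is still too weak for the smallest values $s\in\{5/2,3,7/2,\dots\}$, I would fall back on the slab structure of Section~\ref{sec:slabs}: on each $I_m=(a,b]$, bound $\kappa\le\kappa_a(s)$ and $\betaHat(1/8;\alpha,s)\le\betaHat(1/8;b,s)$ by \eqref{eq:slab-majorization}. This gives a majorant depending only on $s$, which decays like $s^{-1}$. I would check it directly for the finitely many half-integers $s\ge b$ below an explicit threshold, and use the asymptotic estimate above the threshold, obtaining $T_0<3/200$ throughout.
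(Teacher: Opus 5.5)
\textbf{Verdict: the proposal has a genuine gap.} The complete Beta integral is too weak, and the fallback check does not close as formulated.

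Your first step is where the argument breaks down. Extending $\int_0^{1/8}$ to the complete Beta integral over $[0,1/u]$ throws away the truncation at $t=1/8$. That truncation is exactly what keeps $T_0$ small whenever $\lambda=(2s-3)(1-r_1)$ is of order one. Your majorant $\frac{3(s-\alpha)^2\etaStar}{4\alpha s^2(s-1)u^4}$ is, up to a bounded factor, of size $1/\lambda$. My own evaluations show it exceeds $3/200$ on a large part of the admissible set, not just at a few small $s$:
\begin{itemize}
\item As $\alpha\to0$ (where $u\to1$ exactly) it tends to $1/(4(s-1))$. This is $1/6$ at $s=5/2$, whereas $T_0\approx5\cdot10^{-4}$ there, and it stays above $3/200$ for every $s\le17.5$.
\item At $(\alpha,s)=(17,21)$ it is about $0.08$, whereas $T_0\approx10^{-3}$.
\item At $(\alpha,s)=(4,30)$ it is about $0.019$ even with the exact $u$. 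This is precisely where $T_0\approx0.0134$ is near its supremum.
\end{itemize}
The balancing step cannot repair this. Maximizing over $d=s-\alpha$ with your lower bound for $u$ gives $\frac{48\etaStar}{49\alpha(s-1)(1-\etaStar)^2}$, roughly $1.7/(s-1)$ at $\alpha=17$. Your large-$s$ expression is also off by a factor $s$: it should be $\frac34(16/7)^4\frac{s^2\etaStar}{\alpha(s-1)(s-\alpha)^2}\approx20\etaStar/(\alpha s)$. Uniformly in $\alpha$, the Beta route proves the lemma only for $s$ beyond roughly $50$--$60$.

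Everything else therefore rests on your fallback: a check over all $34$ slabs and all half-integers below that threshold, several thousand cases. You do not carry it out, and as formulated it fails. The true supremum of $T_0$ is only about $0.0135$, roughly $10\%$ below $3/200$. Meanwhile the slab loss $\kappa_a(s)/\kappa(b,s)=\frac{b+3}{a+3}\bigl(\frac{s-a}{s-b}\bigr)^2$ is about $1.12$ near $\alpha\approx4$, $s\approx30$. Concretely, on $(3.5,4]$ at $s=30$, take $r_1^{(4)}=\sqrt{\betaHat(1/8;4,30)}$. The majorant $\kappa_{3.5}(30)\int_0^{1/8}t^3\bigl(1-8(1-r_1^{(4)})t\bigr)^{57}\dd t$ is then about $0.01501>3/200$. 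With the complete Beta integral inside it, the failure is far worse.

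The missing idea is to keep the truncation and reduce to a single variable, as the paper does.
\begin{itemize}
\item Set $v=8t$, $k=2s-3$, $X=1-r_1^2$ and $D(X)=\frac X2+\frac{X^2}8+\frac{X^3}{16}\le1-r_1$.
\item Then $(1-(1-r_1)v)^k\le\e^{-\lambda v}$ with $\lambda=kD(X)$. Hence $T_0\le\frac{\kappa}{4096}\Psi(\lambda)$ with $\Psi(\lambda)=\int_0^1v^3\e^{-\lambda v}\dd v$.
\item The truncation now appears as $\Psi\le1/4$ and $\lambda^3\Psi(\lambda)<8/9$, instead of the complete-integral bound $\Psi\le6/\lambda^4$.
\item After replacing $\etaStar$ by $\alpha/(\alpha+3)$, apply AM--GM to the two summands of $X=\frac{7(s-\alpha)}{64s}+\frac{3}{8(\alpha+3)}$. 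This bounds $\kappa/\lambda^3$ on $\{\lambda\ge L\}$ by an explicit constant independent of $\alpha$ and $s$.
\end{itemize}
What remains is a $15$-row rational check in $\lambda$ alone. If you follow this route, note how thin the margin is. Your estimate $1+r_1\le2$ would cost about $10\%$ near the worst case, essentially the whole margin, which is why the sharper $D(X)$ is needed.
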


A complete one-variable proof is given in Appendix~\ref{app:T0}.  It reduces $T_0$ to a Laplace integral and uses only rational inequalities and finite exponential series.

\subsection{The remaining seven intervals}

On $[t_i,t_{i+1}]$, $i=1,\ldots,7$, convexity \eqref{eq:power-convex} gives
\[
 \betaHat(t)^p
 \le
 8(t_{i+1}-t)\betaHat(t_i)^p
 +8(t-t_i)\betaHat(t_{i+1})^p.
\]
Multiplying by $t^3$ and integrating, then collecting adjacent endpoint contributions, yields
\begin{equation}\label{eq:nodal-decomp}
 \int_{1/8}^{1}t^3\betaHat(t)^p\dd t
 \le\sum_{j=1}^{8}c_j\betaHat(t_j)^p,
\end{equation}
where
\begin{equation}\label{eq:weights}
(c_1,\ldots,c_8)=
\left(
\frac{13}{40960},\frac9{4096},\frac{57}{8192},\frac{33}{2048},
\frac{255}{8192},\frac{219}{4096},\frac{693}{8192},
\frac{4519}{81920}
\right).
\end{equation}
The elementary integrations producing these weights are recorded in Appendix~\ref{app:weights}.

Fix a slab $(a,b]$.  Define
\begin{equation}\label{eq:Uj}
 U_j^{a,b}(s)=
 c_j\kappa_a(s)
 \left((1-t_j)^2+\etaStar(b)t_j
 +\frac{b}{s}t_j(1-t_j)\right)^{s-3/2}.
\end{equation}
Combining \eqref{eq:slab-majorization}, Lemma~\ref{lem:T0}, and \eqref{eq:nodal-decomp} gives the master slab bound
\begin{equation}\label{eq:master-slab}
 F(\etaStar(\alpha),\alpha,n)
 <E_a(s)+\frac3{200}+\sum_{j=1}^{8}U_j^{a,b}(s).
\end{equation}

\section{One-dimensional maxima and the finite certificate}\label{sec:certificate}

For fixed $a,b,j$, put
\[
 B_0=(1-t_j)^2+\etaStar(b)t_j,
 \qquad
 B_1=bt_j(1-t_j).
\]
Apart from the positive constant $c_j$, the term in \eqref{eq:Uj} is
\begin{equation}\label{eq:P}
 P(s)=\kappa_a(s)\left(B_0+\frac{B_1}{s}\right)^{s-3/2}.
\end{equation}
The admissible values form the half-integer lattice
\begin{equation}\label{eq:lattice}
 \Lambda_b
 =\left\{s_0+\frac{k}{2}:k=0,1,2,\ldots\right\},
 \qquad
 s_0=\max\left\{\frac52,b\right\}.
\end{equation}
Here $b$ is itself a half-integer.  The condition $s\ge b$ follows from
$\alpha\in(a,b]$, $\alpha<s$, and the half-integrality of $s$.

\begin{lemma}[Strict log-concavity]\label{lem:logconcavity}
For $s>\max\{a,1/2\}$, the function $P$ in \eqref{eq:P} is strictly log-concave.
\end{lemma}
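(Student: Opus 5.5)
The plan is to split $\log P$ into the two summands coming from \eqref{eq:P},
\[
 \log P(s)=\log\kappa_a(s)+\Bigl(s-\tfrac32\Bigr)\log\Bigl(B_0+\frac{B_1}{s}\Bigr),
\]
and to show that the first is strictly concave and the second is concave on $s>\max\{a,1/2\}$. First I record the positivity facts needed for the logarithms to be defined. We have $B_0=(1-t_j)^2+\etaStar(b)t_j>0$ because $\etaStar(b)>0$ and $t_j\in(0,1]$. We also have $B_1=bt_j(1-t_j)\ge0$. Hence $u(s):=B_0s+B_1>0$ for $s>0$.

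\emph{The factor $\kappa_a$.} For $a>0$, \eqref{eq:kappa} at $\alpha=a$ (on the branch of the slab) gives
\[
 \log\kappa_a(s)=\log(2s-1)+\log(2s+1)+2\log(s-a)-\log s+\text{const}.
\]
Differentiating twice,
\[
 (\log\kappa_a)''=-\frac{1}{(s-\frac12)^2}-\frac{4}{(2s+1)^2}-\frac{2}{(s-a)^2}+\frac1{s^2}.
\]
Since $s>1/2$, we have $(s-\frac12)^{-2}>s^{-2}$, so the first and last terms together are already negative and the remaining terms are negative. Thus $\log\kappa_a$ is strictly concave on $s>\max\{a,1/2\}$.

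For $a=0$, \eqref{eq:a0-limits} gives
\[
 \log\kappa_0=\log(2s-1)+\log(2s+1)+\log s+\text{const},
\]
which is a sum of strictly concave functions on $s>1/2$.

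\emph{The power factor.} Write $g(s)=(s-\frac32)h(s)$ with $h(s)=\log u(s)-\log s$. Then
\[
 h'=-\frac{B_1}{su},\qquad h''=\frac{B_1(u+B_0s)}{s^2u^2}.
\]
I will compute $g''=2h'+(s-\frac32)h''$ and put everything over the common denominator $s^2u^2$. The $B_0s^2$ terms should cancel, leaving
\[
 g''(s)=-\frac{B_1}{s^2u^2}\Bigl(B_1s+3B_0s+\tfrac32B_1\Bigr)\le0 .
\]
This identity holds for all $s>0$, so the sign of $s-\frac32$ plays no role. Hence $g$ is concave, and adding it to the strictly concave $\log\kappa_a$ gives strict log-concavity of $P$.

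The only step where care is needed is this algebraic cancellation in $g''$. A priori $h$ is convex and is multiplied by a factor $s-\frac32$ that may vary in sign, so concavity is not visible term by term. I will verify the cancellation explicitly, expanding $(s-\frac32)(2B_0s+B_1)-2su$, rather than trying to bound the terms separately. Everything else is routine differentiation.
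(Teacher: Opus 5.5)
Your proposal is correct and follows the paper's proof essentially line for line. It uses the same split $\log P=\log\kappa_a+(s-\frac32)\log(B_0+B_1/s)$, the same bound on $(\log\kappa_a)''$ pairing $-1/(s-\frac12)^2$ with $+1/s^2$, and the same closed form for the second derivative of the power term: your $-B_1(B_1s+3B_0s+\frac32B_1)/(s^2u^2)$ equals the paper's $-B_1(6B_0s+2B_1s+3B_1)/(2s^2(B_0s+B_1)^2)$. Your separate treatment of $a=0$ is harmless but unnecessary, since the paper's factorization $\kappa_a(s)=C_a(s-\frac12)(s+\frac12)(s-a)^2/s$ with $C_a>0$ already covers that case.
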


\begin{proof}
Let
\[
 \phi(s)=\log P(s).
\]
The positive constants independent of $s$ disappear after differentiation.  Since
\[
 \kappa_a(s)=C_a\,
 \frac{(s-\frac12)(s+\frac12)(s-a)^2}{s}
\]
for a positive constant $C_a$, one has
\[
 \log\kappa_a(s)
 =\log C_a+
 \log\left(s-\frac12\right)
 +\log\left(s+\frac12\right)
 +2\log(s-a)-\log s.
\]
Consequently,
\begin{equation}\label{eq:logkappa2}
 (\log\kappa_a)''
 =-\frac1{(s-\frac12)^2}
  -\frac1{(s+\frac12)^2}
  -\frac2{(s-a)^2}
  +\frac1{s^2}<0.
\end{equation}
Indeed, $s-1/2<s$ gives
$-1/(s-1/2)^2+1/s^2<0$, and the remaining two terms are nonpositive, with $-2/(s-a)^2<0$.

It remains to treat the power factor.  Set
\[
 q(s)=B_0+\frac{B_1}{s}=\frac{B_0s+B_1}{s},
 \qquad
 h(s)=\left(s-\frac32\right)\log q(s).
\]
Because
\[
 \frac{q'(s)}{q(s)}
 =-\frac{B_1}{s(B_0s+B_1)},
\]
we obtain
\[
 h'(s)
 =\log q(s)
 -\frac{(s-\frac32)B_1}{s(B_0s+B_1)}.
\]
Differentiating once more and putting the terms over the common denominator
$2s^2(B_0s+B_1)^2$ gives
\begin{equation}\label{eq:powerlog2}
 h''(s)
 =-\frac{B_1(6B_0s+2B_1s+3B_1)}
 {2s^2(B_0s+B_1)^2}\le0.
\end{equation}
All factors in the denominator are positive.  Also $B_0>0$ and $B_1\ge0$; when $t_j=1$, one has $B_1=0$ and hence $h''=0$.

Finally,
\[
 \phi''(s)=(\log\kappa_a)''(s)+h''(s)<0
\]
by \eqref{eq:logkappa2} and \eqref{eq:powerlog2}.  Thus $P$ is strictly log-concave.
\end{proof}

For any $s\in\Lambda_b$, define the adjacent ratio
\begin{equation}\label{eq:ratio-def}
 R(s)=\frac{P(s+1/2)}{P(s)}
 =\frac{\kappa_a(s+1/2)}{\kappa_a(s)}
 \frac{\left(B_0+B_1/(s+1/2)\right)^{s-1}}
      {\left(B_0+B_1/s\right)^{s-3/2}}.
\end{equation}
The ratio is defined at every lattice point because $s+1/2\in\Lambda_b$.
If $\phi=\log P$, then
\[
 \frac{\mathrm d}{\mathrm ds}\log R(s)
 =\phi'(s+1/2)-\phi'(s)<0,
\]
since strict concavity makes $\phi'$ strictly decreasing.  Therefore $R(s)$ is strictly decreasing on the whole continuous domain, and in particular on the half-integer lattice.

\begin{corollary}[Two-ratio maximum certificate]\label{cor:ratio}
Let $\Lambda_b$ and $s_0$ be as in \eqref{eq:lattice}, and let $s_\ast\in\Lambda_b$.
\begin{enumerate}[label=(\alph*),leftmargin=2.2em]
\item If $s_\ast>s_0$ and
\[
 R(s_\ast-1/2)\ge1,
 \qquad
 R(s_\ast)\le1,
\]
then $P(s_\ast)$ is a global maximum of $P$ on $\Lambda_b$.
\item If $s_\ast=s_0$, there is no admissible lattice point to its left.  In this boundary case the single condition
\[
 R(s_0)\le1
\]
is sufficient to conclude that $P(s_0)$ is a global maximum on $\Lambda_b$.
\end{enumerate}
The maximum need not be unique when one of the displayed inequalities is an equality.
\end{corollary}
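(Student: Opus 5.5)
The plan is to read the lattice behaviour of $P$ directly off the monotonicity of the ratio $R$. Write the lattice points as $s_k=s_0+k/2$ for $k\ge0$. Each value $P(s_k)$ is strictly positive. Indeed, $\kappa_a(s)>0$ for $s>\max\{a,1/2\}$, because $s\ge s_0\ge b>a$ and $s\ge5/2$. Also $B_0+B_1/s>0$, since $B_0>0$ and $B_1\ge0$. So $R(s_k)=P(s_{k+1})/P(s_k)$ is well defined and positive. Moreover, $P(s_{k+1})\ge P(s_k)$ holds exactly when $R(s_k)\ge1$, and $P(s_{k+1})\le P(s_k)$ exactly when $R(s_k)\le1$.

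The key input has already been established just before the corollary. By Lemma~\ref{lem:logconcavity}, $\phi=\log P$ is strictly concave on the continuous domain, so $\phi'$ is strictly decreasing. Hence $R$ is strictly decreasing on $[s_0,\infty)$, and in particular along the lattice. The only point to check here is that the whole segment $[s_k,s_{k+1}]$ lies in the region $s>\max\{a,1/2\}$ where the lemma applies. This holds because $s_0>a$ and $s_0\ge5/2$.

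For part (a), suppose $R(s_\ast)\le1$. Then every later lattice point $s\ge s_\ast$ satisfies $R(s)\le R(s_\ast)\le1$, so $P(s+1/2)\le P(s)$. Inducting to the right gives $P(s)\le P(s_\ast)$ for all $s\ge s_\ast$ in $\Lambda_b$. Now suppose also $R(s_\ast-1/2)\ge1$. Then every lattice point $s$ with $s_0\le s\le s_\ast-1/2$ satisfies $R(s)\ge R(s_\ast-1/2)\ge1$, so $P(s)\le P(s+1/2)$. Chaining these inequalities up to $s_\ast$ gives $P(s)\le P(s_\ast)$ for all $s\le s_\ast$. The two halves together show that $P(s_\ast)$ is a global maximum on $\Lambda_b$.

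Part (b) is the first half of this argument alone. When $s_\ast=s_0$ there are no lattice points to the left, so the condition $R(s_0)\le1$ suffices.

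For the final remark on non-uniqueness, note what happens when one of the inequalities is an equality. If $R(s_\ast)=1$, then $P(s_\ast+1/2)=P(s_\ast)$, so a second maximizer appears. If $R(s_\ast-1/2)=1$, then $s_\ast-1/2$ is also a maximizer. Strict monotonicity of $R$ shows that no further ties are possible, but the statement does not require this.

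There is no real obstacle in this argument. The only step needing any care is confirming that strict log-concavity, and hence the decrease of $R$, holds on the entire half-integer range starting at $s_0$. That is exactly the domain condition in Lemma~\ref{lem:logconcavity}, combined with $s\ge b>a$.
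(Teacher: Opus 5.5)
Your proposal is correct and follows essentially the same route as the paper. Strict log-concavity makes $R$ decreasing along $\Lambda_b$, and you chain $P(u+1/2)=R(u)P(u)$ leftward using $R(s_\ast-1/2)\ge1$ and rightward using $R(s_\ast)\le1$, with part (b) being the rightward half alone. Your extra checks are details the paper leaves implicit, and they are sound: that $P>0$ on $\Lambda_b$, and that $\Lambda_b$ lies in the region $s>\max\{a,1/2\}$ where Lemma~\ref{lem:logconcavity} applies.
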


\begin{proof}
First suppose $s_\ast>s_0$.  Let $r\in\Lambda_b$ with $r<s_\ast$.  Every ratio occurring while moving from $r$ to $s_\ast$ has its argument at most $s_\ast-1/2$.  Since $R$ is decreasing,
\[
 R(u)\ge R(s_\ast-1/2)\ge1
 \qquad
 (r\le u\le s_\ast-1/2,
 \ u\in\Lambda_b).
\]
Thus
\[
 P(u+1/2)=R(u)P(u)\ge P(u)
\]
at every step, and iteration gives $P(r)\le P(s_\ast)$.

Now let $r>s_\ast$.  Every ratio used while moving from $s_\ast$ to $r$ has argument at least $s_\ast$.  Therefore
\[
 R(u)\le R(s_\ast)\le1
 \qquad
 (s_\ast\le u\le r-1/2,
 \ u\in\Lambda_b),
\]
and hence $P(u+1/2)\le P(u)$ at every step.  Iteration again gives $P(r)\le P(s_\ast)$.  This proves part (a).

If $s_\ast=s_0$, the left-hand argument is unnecessary because no point of $\Lambda_b$ lies below $s_0$.  Since $R$ is decreasing, $R(u)\le R(s_0)\le1$ for every $u\in\Lambda_b$ with $u\ge s_0$.  Hence the sequence is nonincreasing from its first term onward, proving part (b).
\end{proof}

\begin{remark}[Boundary case]\label{rem:left-endpoint}
If $s_\ast=s_0$, then $s_0-1/2\notin\Lambda_b$, so there is no predecessor ratio to verify.  The condition $R(s_0)\le1$, together with the decrease of $R$, implies that $P$ is nonincreasing on the entire admissible lattice beginning at $s_0$.
\end{remark}

The elementary term $E_a(s)$ has the form $C_a+D_a/s$ with $D_a>0$, and is therefore maximized at the smallest admissible half-integer $s=\max\{5/2,b\}$.

Appendix~\ref{app:full-certificate} gives, for every slab and each $U_j$, the maximizing half-integer and an upward-rounded interval upper bound.  The archived file \path{Scalar_Certificate_HalfUnit.csv} is part of the finite certificate: in addition to the values printed in the PDF tables, it records a directed lower bound for $R(s_\ast-1/2)$ whenever $s_\ast>s_0$, and a directed upper bound for $R(s_\ast)$ in every case.  The fixed-candidate checker \path{verify_scalar_certificate_exact.py} reads these proposed maximizers and verifies the ratio signs, upper values, and row totals using exact rational arithmetic, rational Taylor enclosures for logarithms, and rational square-root brackets.  It verifies the stated maximizing degree without performing any search.

A representative check occurs on the worst slab $(0,1/2]$ for $U_1$.  Here the table lists $s_\ast=12.5$, and directed interval evaluation gives
\[
 R(12)>1.0009543562>1,
 \qquad
 R(12.5)<0.9961149406<1,
\]
while
\[
 U_1^{0,1/2}(12.5)<0.029954.
\]
Corollary~\ref{cor:ratio} proves that this is the global half-integer maximum.  Every other entry is certified in exactly the same way.

For convenience, the row totals are summarized below.  Each total includes the upward-rounded maximum of $E_a$, the eight upward-rounded nodal maxima, and $3/200$.

\begin{center}
\small
\begin{tabular}{cc@{\qquad\qquad}cc}
\toprule
$\alpha$ slab & upper total & $\alpha$ slab & upper total\\
\midrule
$(0,0.5]$ & 0.966537 & $(8.5,9]$ & 0.868742 \\
$(0.5,1]$ & 0.909810 & $(9,9.5]$ & 0.881991 \\
$(1,1.5]$ & 0.858561 & $(9.5,10]$ & 0.895303 \\
$(1.5,2]$ & 0.833062 & $(10,10.5]$ & 0.909358 \\
$(2,2.5]$ & 0.823678 & $(10.5,11]$ & 0.923668 \\
$(2.5,3]$ & 0.802545 & $(11,11.5]$ & 0.870814 \\
$(3,3.5]$ & 0.791615 & $(11.5,12]$ & 0.861055 \\
$(3.5,4]$ & 0.788032 & $(12,12.5]$ & 0.852305 \\
$(4,4.5]$ & 0.787550 & $(12.5,13]$ & 0.844273 \\
$(4.5,5]$ & 0.790978 & $(13,13.5]$ & 0.836964 \\
$(5,5.5]$ & 0.796623 & $(13.5,14]$ & 0.830001 \\
$(5.5,6]$ & 0.803976 & $(14,14.5]$ & 0.823685 \\
$(6,6.5]$ & 0.811866 & $(14.5,15]$ & 0.817881 \\
$(6.5,7]$ & 0.822444 & $(15,15.5]$ & 0.812479 \\
$(7,7.5]$ & 0.833053 & $(15.5,16]$ & 0.807370 \\
$(7.5,8]$ & 0.844389 & $(16,16.5]$ & 0.802504 \\
$(8,8.5]$ & 0.856239 & $(16.5,17]$ & 0.797954 \\
\bottomrule
\end{tabular}
\end{center}

The largest row is the first:
\begin{equation}\label{eq:global-certificate}
 E_0+\frac3{200}+\sum_{j=1}^8U_j^{0,1/2}
 <0.966537<\frac{97}{100}.
\end{equation}

\section{Completion of the proof}\label{sec:completion}

Suppose a counterexample exists in degree $n\ge6$.  Proposition~\ref{prop:origin} gives
\[
 1\le F(\eta,\alpha,n).
\]
By Lemma~\ref{lem:Feta} and the polar bound,
\[
 F(\eta,\alpha,n)
 \le F(\etaStar(\alpha),\alpha,n).
\]
Choose the half-unit slab $(a,b]$ containing $\alpha$.  The master bound \eqref{eq:master-slab}, the log-concavity certificate, and \eqref{eq:global-certificate} give
\[
 F(\etaStar(\alpha),\alpha,n)<\frac{97}{100}<1,
\]
a contradiction.  This proves Theorem~\ref{thm:sendov}.

\begin{remark}
The proof separates analysis from arithmetic.  The analytic part consists of monotonicity in $\eta$ and $\alpha$, convexity in $t$, and log-concavity in the half-integer $s$.  The arithmetic part consists of $34\times9$ explicit one-variable evaluations.  No numerical optimizer or unbounded degree search is used: strict log-concavity proves that each displayed half-integer is the global maximizer.
\end{remark}

\appendix

\section{Monotonicity in the endpoint parameter and feasibility}\label{app:eta}

Fix admissible $n$ and $\alpha$, and set $p=(n-4)/2>0$.  From \eqref{eq:beta-eta-alpha-n},
\[
 \frac{\partial}{\partial\eta}\beta_{\eta,\alpha,n}(t)=t\ge0.
\]
The elementary terms satisfy
\[
 \frac{\partial}{\partial\eta}
 \frac{\eta}{2\alpha(1-\eta)}
 =\frac1{2\alpha(1-\eta)^2}>0,
\]
\[
 \frac{\partial}{\partial\eta}\frac\eta{4\alpha}
 =\frac1{4\alpha}>0,
 \qquad
 \frac{\partial}{\partial\eta}\frac\eta{4\alpha(n-1)}
 =\frac1{4\alpha(n-1)}>0.
\]
For the integral contribution, define
\[
 I(\eta)=\eta\int_0^1t^3\beta_{\eta,\alpha,n}(t)^p\dd t.
\]
Differentiation under the integral sign gives
\[
 I'(\eta)=
 \int_0^1t^3\beta^{p-1}
 \bigl(\beta+\eta pt\bigr)\dd t\ge0.
\]
The remaining prefactor is positive, proving Lemma~\ref{lem:Feta}.

For completeness, the feasibility statement follows from
\[
 c^{\ast}=c_0-\frac{\etaStar-\eta}{2}\le c_0\le\sqrt A
\]
and
\[
 c^{\ast}=\frac{1+A-\etaStar}{2}>\frac A2>0.
\]
Thus the replacement $\eta\mapsto\etaStar$ both increases the objective and preserves positivity of the quadratic.

\section{The first-interval estimate}\label{app:T0}

We give the details of Lemma~\ref{lem:T0}.  Put
\[
 k=n-4=2s-3,
 \qquad
 B=2(\alpha+3),
 \qquad
 w=n-1-2\alpha.
\]
For this estimate alone, the rational polar bound $\eta\le\alpha/(\alpha+3)$ may be used on the entire range $0<\alpha\le17$.  Indeed, the prefactor $\kappa$ is proportional to $\eta$, the nodal value $r_1=\sqrt{\betaHat(1/8)}$ increases with $\eta$, and the first-chord integral increases with $r_1$.  Thus replacing $\eta$ by the rational upper bound can only increase $T_0$.
Let $r_1=\sqrt{\betaHat(1/8)}$.  Substitution gives
\begin{equation}\label{eq:T0-X}
 X:=1-r_1^2
 =\frac{7w}{64(k+3)}+\frac3{4B},
 \qquad 0<X<\frac{15}{64},
\end{equation}
and
\begin{equation}\label{eq:T0-P}
 \frac\kappa{4096}
 <P:=\frac{(k+3)w^2}{8192B}.
\end{equation}
Define
\[
 D(X)=\frac X2+\frac{X^2}{8}+\frac{X^3}{16}.
\]
Since
\[
 \left(1-\frac X2-\frac{X^2}{8}-\frac{X^3}{16}\right)^2-(1-X)
 =\frac{X^4(X^2+4X+20)}{256}>0,
\]
and both sides are nonnegative,
\[
 r_1=\sqrt{1-X}
 \le 1-D(X),
 \qquad
 1-r_1\ge D(X).
\]
By \eqref{eq:T0-def}, the chord is $\ell_0(t)=1-8(1-r_1)t$.  With the change of variables $u=8t$ and $k=2s-3=n-4$,
\begin{align*}
 T_0
 &=\frac{\kappa}{4096}
   \int_0^1u^3\bigl(1-(1-r_1)u\bigr)^k\dd u\\
 &\le\frac{\kappa}{4096}
   \int_0^1u^3\bigl(1-D(X)u\bigr)^k\dd u.
\end{align*}
For $0\le u\le1$, the factor $1-D(X)u$ is positive and $1-z\le\e^{-z}$ gives
\[
 \bigl(1-D(X)u\bigr)^k
 \le\e^{-kD(X)u}.
\]
Using $\kappa/4096<P$ from \eqref{eq:T0-P}, and setting
\[
 \lambda=kD(X),
 \qquad
 \Psi(\lambda)=\int_0^1u^3\e^{-\lambda u}\dd u,
\]
we obtain the explicit reduction
\begin{equation}\label{eq:T0-Laplace-reduction}
 T_0<P\Psi(\lambda).
\end{equation}

Write
\[
 a_1=\frac{7w}{64(k+3)},
 \qquad
 a_2=\frac3{4B},
 \qquad X=a_1+a_2.
\]
Weighted AM--GM gives $X^3\ge(27/4)a_1^2a_2$, and hence
\[
 P\le\frac{8(k+3)^3}{3969}X^3.
\]
With $S(X)=X^2+2X+8$, one has $D(X)=XS(X)/16$ and
\[
 \frac{P}{\lambda^3}
 \le\frac{32768}{3969}
 \left(\frac1{S(X)}+\frac{3X}{16\lambda}\right)^3.
\]
If $\lambda\ge L$, then
\[
 \frac1{S(X)}+\frac{3X}{16\lambda}
 \le h_L(X):=\frac1{S(X)}+\frac{3X}{16L}.
\]
For $0\le X\le15/64$ and $0<L\le23/5$,
\[
 h_L'(X)
 =-\frac{2(X+1)}{S(X)^2}+\frac{3}{16L}.
\]
The elementary bounds
\[
 2(X+1)\le\frac{79}{32},
 \qquad
 S(X)^2\ge64,
 \qquad
 \frac3{16L}\ge\frac{15}{368}
\]
give
\[
 h_L'(X)
 \ge-\frac{79}{2048}+\frac{15}{368}
 =\frac{103}{47104}>0.
\]
Thus $h_L$ is increasing on the whole permitted $X$-interval.  Since
\[
 S(15/64)=\frac{34913}{4096},
\]
we conclude that
\[
 h_L(X)
 \le h_L(15/64)
 =\frac{4096}{34913}+\frac{45}{1024L}.
\]
Consequently, whenever $\lambda\ge L$ and $0<L\le23/5$,
\begin{equation}\label{eq:T0-C}
 \frac{P}{\lambda^3}
 \le C(L):=\frac{32768}{3969}
 \left(\frac{4096}{34913}+\frac{45}{1024L}\right)^3.
\end{equation}

Set
\[
 g(\lambda)=\lambda^3\Psi(\lambda)
 =\frac{6-\e^{-\lambda}(\lambda^3+3\lambda^2+6\lambda+6)}{\lambda}.
\]
We record the two elementary properties used below:
\begin{equation}\label{eq:T0-gprops}
 g'(\lambda)>0\quad(0<\lambda\le23/5),
 \qquad
 g(\lambda)<\frac89\quad(\lambda>0).
\end{equation}
For the first, the numerator of $g'$ is
\[
 H(\lambda)=
 \e^{-\lambda}(\lambda^4+\lambda^3+3\lambda^2+6\lambda+6)-6,
\]
with
\[
 H(0)=0,
 \qquad
 H'(\lambda)=\e^{-\lambda}\lambda^3(3-\lambda).
\]
Thus $H$ increases on $(0,3)$ and decreases thereafter.  It is enough to check $H(23/5)>0$.  The finite exponential estimate
\[
 \e^{23/5}<
 \sum_{m=0}^{9}\frac{(23/5)^m}{m!}
 +\frac{(23/5)^{10}}{10!\,[1-(23/5)/11]}<100
\]
gives
\[
 \e^{-23/5}
 \left[
 \left(\frac{23}{5}\right)^4+
 \left(\frac{23}{5}\right)^3+
 3\left(\frac{23}{5}\right)^2+
 6\left(\frac{23}{5}\right)+6
 \right]
 >\frac1{100}\frac{401351}{625}>6.
\]
Hence $H>0$ on $(0,23/5]$.

For the second property in \eqref{eq:T0-gprops}, define
\[
 Q(\lambda)=
 \e^{-\lambda}(\lambda^3+3\lambda^2+6\lambda+6)
 -6+\frac89\lambda.
\]
Then $g<8/9$ is equivalent to $Q>0$, and
\[
 Q'(\lambda)=\frac89-\lambda^3\e^{-\lambda},
 \qquad
 Q''(\lambda)=\lambda^2\e^{-\lambda}(\lambda-3).
\]
The only possible positive minimum occurs at the larger zero $\rho>3$ of $Q'$.  The finite Taylor estimate
\[
 \e^{29/6}<
 \sum_{m=0}^{7}\frac{(29/6)^m}{m!}
 +\frac{(29/6)^8}{8!\,[1-(29/6)/9]}
 <\frac{24389}{192}
 =\frac98\left(\frac{29}{6}\right)^3
\]
shows $\rho>29/6$.  At $Q'(\rho)=0$,
\[
 Q(\rho)=\frac89
 \left(\rho+1+\frac3\rho+\frac6{\rho^2}+\frac6{\rho^3}\right)-6.
\]
The expression in parentheses is increasing for $\rho\ge29/6$, and at $29/6$ it is larger than $27/4$.  Therefore $Q(\rho)>0$, proving $g<8/9$.

For $0<\lambda<1/4$, the estimate $D(X)\ge X/2$ gives
\[
 \lambda=kD(X)\ge\frac{kX}{2},
 \qquad
 X\le\frac{2\lambda}{k}.
\]
Consequently,
\begin{align*}
 P
 &\le\frac{8(k+3)^3}{3969}X^3\\
 &\le\frac{64}{3969}
    \left(\frac{k+3}{k}\right)^3\lambda^3\\
 &\le\frac{1000}{3969}\lambda^3
 <\frac{125}{31752},
\end{align*}
where the penultimate inequality uses $k\ge2$ and $(k+3)/k\le5/2$.  Since $\Psi(\lambda)\le\int_0^1u^3\dd u=1/4$, \eqref{eq:T0-Laplace-reduction} yields
\[
 T_0<\frac{125}{127008}<\frac3{200}.
\]
For $\lambda\ge23/5$, \eqref{eq:T0-C} and $g<8/9$ give the same conclusion.  The compact interval $[1/4,23/5]$ is covered by the exact rational rows in Table~\ref{tab:T0}.  On a row $L\le\lambda\le U$, monotonicity of $g$ gives
\[
 T_0\le C(L)g(U)<C(L)G<\frac3{200}.
\]
To verify $g(U)<G$, write
\[
 A(U)=U^3+3U^2+6U+6.
\]
Since $6-GU>0$, the inequality $g(U)<G$ is equivalent to
\begin{equation}\label{eq:g-exp-equivalent}
 \e^U<\frac{A(U)}{6-GU}.
\end{equation}
For the truncation order $N$ displayed in Table~\ref{tab:T0}, positivity of the exponential series and the geometric bound on the tail give
\begin{equation}\label{eq:exp-rational-upper}
 \e^U
 <\sum_{m=0}^{N}\frac{U^m}{m!}
 +\frac{U^{N+1}}{(N+1)!\,[1-U/(N+2)]}.
\end{equation}
Every quantity on the right of \eqref{eq:exp-rational-upper} is rational.  Direct rational comparison verifies that it is smaller than the right-hand side of \eqref{eq:g-exp-equivalent}; a second exact rational comparison gives $C(L)G<3/200$.  The file \path{verify_T0_certificate.py} performs precisely these two checks and writes the exact margins to \path{T0_Certificate.csv}.  This proves Lemma~\ref{lem:T0}.

\begin{table}[ht]
\centering
\small
\caption{Finite exact-rational cover for the first-interval bound.  The final column is the truncation order in \eqref{eq:exp-rational-upper}.}\label{tab:T0}
\begin{tabular}{cccc}
\toprule
$L$&$U$&$G$&$N$\\
\midrule
$1/4$&$3/4$&$3/50$&4\\
$3/4$&$3/2$&$263/1000$&5\\
$3/2$&$9/4$&$509/1000$&6\\
$9/4$&$3$&$353/500$&7\\
$3$&$17/5$&$39/50$&8\\
$17/5$&$18/5$&$809/1000$&7\\
$18/5$&$19/5$&$104/125$&8\\
$19/5$&$39/10$&$421/500$&8\\
$39/10$&$4$&$17/20$&9\\
$4$&$41/10$&$429/500$&8\\
$41/10$&$21/5$&$108/125$&9\\
$21/5$&$43/10$&$87/100$&8\\
$43/10$&$22/5$&$437/500$&9\\
$22/5$&$89/20$&$2189/2500$&9\\
$89/20$&$23/5$&$22/25$&9\\
\bottomrule
\end{tabular}
\end{table}

\section{Chord weights}\label{app:weights}

For $i=1,\ldots,7$, set
\[
 L_i=\int_{t_i}^{t_{i+1}}t^3\,8(t_{i+1}-t)\dd t,
 \qquad
 R_i=\int_{t_i}^{t_{i+1}}t^3\,8(t-t_i)\dd t.
\]
Direct integration gives
\[
\begin{array}{c|cc}
 i&L_i&R_i\\ \hline
1&13/40960&49/81920\\
2&131/81920&97/40960\\
3&47/10240&499/81920\\
4&821/81920&1/80\\
5&763/40960&1829/81920\\
6&2551/81920&1487/40960\\
7&989/20480&4519/81920.
\end{array}
\]
Then $c_1=L_1$, $c_j=R_{j-1}+L_j$ for $2\le j\le7$, and $c_8=R_7$, yielding \eqref{eq:weights}.

\section{Full finite certificate}\label{app:full-certificate}

A cell $0.029954\,@\,12.5$ means that the term is strictly smaller than $0.029954$ for every admissible half-integer $s$, and that its global maximizing half-integer is $s=12.5$.  All displayed bounds are rounded upward to six decimal places.  The final column adds the displayed $E$ bound, the eight displayed nodal bounds, and $3/200$.

\begin{landscape}
\scriptsize
\setlength{\tabcolsep}{2.8pt}
\renewcommand{\arraystretch}{1.08}
\begin{center}
\textbf{Table D.1. Rational branch, $0<\alpha\le5.5$.}\par\medskip
\begin{tabular}{c|c|cccccccc|c}
\toprule
$\alpha$ slab & $E$ & $U_1$ & $U_2$ & $U_3$ & $U_4$ & $U_5$ & $U_6$ & $U_7$ & $U_8$ & total\\
\midrule
$(0,0.5]$ & 0.366667\,@\,2.5 & 0.029954\,@\,12.5 & 0.034952\,@\,6 & 0.045084\,@\,4 & 0.061696\,@\,3 & 0.086158\,@\,2.5 & 0.110753\,@\,2.5 & 0.137467\,@\,2.5 & 0.078806\,@\,2.5 & 0.966537 \\
$(0.5,1]$ & 0.352381\,@\,2.5 & 0.030590\,@\,13.5 & 0.035086\,@\,7 & 0.043651\,@\,4.5 & 0.056625\,@\,3.5 & 0.074461\,@\,3 & 0.097295\,@\,3 & 0.129068\,@\,2.5 & 0.075653\,@\,2.5 & 0.909810 \\
$(1,1.5]$ & 0.341667\,@\,2.5 & 0.031063\,@\,15 & 0.035307\,@\,8 & 0.043350\,@\,5.5 & 0.054830\,@\,4.5 & 0.069569\,@\,4 & 0.088414\,@\,3.5 & 0.113690\,@\,3.5 & 0.065671\,@\,3.5 & 0.858561 \\
$(1.5,2]$ & 0.333334\,@\,2.5 & 0.031415\,@\,16 & 0.035654\,@\,8.5 & 0.043149\,@\,6.5 & 0.053582\,@\,5 & 0.067034\,@\,4.5 & 0.084233\,@\,4.5 & 0.106897\,@\,4.5 & 0.062764\,@\,4.5 & 0.833062 \\
$(2,2.5]$ & 0.326667\,@\,2.5 & 0.031654\,@\,17 & 0.035941\,@\,9.5 & 0.043366\,@\,7 & 0.053550\,@\,6 & 0.066268\,@\,5.5 & 0.082552\,@\,5 & 0.105741\,@\,5 & 0.062939\,@\,5.5 & 0.823678 \\
$(2.5,3]$ & 0.303031\,@\,3 & 0.031803\,@\,18 & 0.036201\,@\,10 & 0.043447\,@\,7.5 & 0.053462\,@\,6.5 & 0.066007\,@\,6 & 0.082387\,@\,6 & 0.106399\,@\,6 & 0.064808\,@\,6.5 & 0.802545 \\
$(3,3.5]$ & 0.285715\,@\,3.5 & 0.031881\,@\,18.5 & 0.036444\,@\,11 & 0.043832\,@\,8.5 & 0.053696\,@\,7.5 & 0.066110\,@\,7 & 0.082895\,@\,6.5 & 0.107984\,@\,6.5 & 0.068058\,@\,7 & 0.791615 \\
$(3.5,4]$ & 0.272436\,@\,4 & 0.031917\,@\,19.5 & 0.036661\,@\,11.5 & 0.044100\,@\,9 & 0.054101\,@\,8 & 0.066753\,@\,7.5 & 0.083809\,@\,7.5 & 0.111171\,@\,7.5 & 0.072084\,@\,8 & 0.788032 \\
$(4,4.5]$ & 0.261905\,@\,4.5 & 0.031901\,@\,20.5 & 0.036806\,@\,12.5 & 0.044317\,@\,10 & 0.054390\,@\,8.5 & 0.067218\,@\,8 & 0.085273\,@\,8 & 0.114081\,@\,8.5 & 0.076659\,@\,9 & 0.787550 \\
$(4.5,5]$ & 0.253334\,@\,5 & 0.031849\,@\,21 & 0.036998\,@\,13 & 0.044677\,@\,10.5 & 0.054813\,@\,9.5 & 0.067954\,@\,9 & 0.086432\,@\,8.5 & 0.118217\,@\,9 & 0.081704\,@\,10 & 0.790978 \\
$(5,5.5]$ & 0.246213\,@\,5.5 & 0.031776\,@\,22 & 0.037131\,@\,13.5 & 0.044961\,@\,11 & 0.055338\,@\,10 & 0.068837\,@\,9.5 & 0.088316\,@\,9.5 & 0.121883\,@\,10 & 0.087168\,@\,11 & 0.796623 \\
\bottomrule
\end{tabular}
\end{center}
\end{landscape}

\begin{landscape}
\scriptsize
\setlength{\tabcolsep}{2.8pt}
\renewcommand{\arraystretch}{1.08}
\begin{center}
\textbf{Table D.2. Rational branch, $5.5<\alpha\le11$.}\par\medskip
\begin{tabular}{c|c|cccccccc|c}
\toprule
$\alpha$ slab & $E$ & $U_1$ & $U_2$ & $U_3$ & $U_4$ & $U_5$ & $U_6$ & $U_7$ & $U_8$ & total\\
\midrule
$(5.5,6]$ & 0.240197\,@\,6 & 0.031677\,@\,22.5 & 0.037220\,@\,14 & 0.045185\,@\,11.5 & 0.055770\,@\,10.5 & 0.069572\,@\,10 & 0.089974\,@\,10 & 0.126364\,@\,10.5 & 0.093017\,@\,12 & 0.803976 \\
$(6,6.5]$ & 0.235043\,@\,6.5 & 0.031563\,@\,23.5 & 0.037323\,@\,15 & 0.045394\,@\,12.5 & 0.056128\,@\,11 & 0.070211\,@\,11 & 0.091435\,@\,11 & 0.130327\,@\,11.5 & 0.099442\,@\,12.5 & 0.811866 \\
$(6.5,7]$ & 0.230577\,@\,7 & 0.031442\,@\,24 & 0.037418\,@\,15.5 & 0.045677\,@\,13 & 0.056544\,@\,12 & 0.071179\,@\,11.5 & 0.093388\,@\,11.5 & 0.135032\,@\,12 & 0.106187\,@\,13.5 & 0.822444 \\
$(7,7.5]$ & 0.226667\,@\,7.5 & 0.031304\,@\,24.5 & 0.037484\,@\,16 & 0.045916\,@\,13.5 & 0.057031\,@\,12.5 & 0.072026\,@\,12 & 0.095117\,@\,12 & 0.139270\,@\,12.5 & 0.113238\,@\,14.5 & 0.833053 \\
$(7.5,8]$ & 0.223215\,@\,8 & 0.031163\,@\,25.5 & 0.037529\,@\,16.5 & 0.046118\,@\,14 & 0.057457\,@\,13 & 0.072771\,@\,12.5 & 0.096656\,@\,12.5 & 0.143889\,@\,13.5 & 0.120591\,@\,15.5 & 0.844389 \\
$(8,8.5]$ & 0.220143\,@\,8.5 & 0.031021\,@\,26 & 0.037556\,@\,17 & 0.046290\,@\,14.5 & 0.057830\,@\,13.5 & 0.073432\,@\,13 & 0.098390\,@\,13.5 & 0.148335\,@\,14 & 0.128242\,@\,16.5 & 0.856239 \\
$(8.5,9]$ & 0.217392\,@\,9 & 0.030873\,@\,26.5 & 0.037595\,@\,18 & 0.046438\,@\,15 & 0.058160\,@\,14 & 0.074178\,@\,14 & 0.100132\,@\,14 & 0.152721\,@\,15 & 0.136253\,@\,17 & 0.868742 \\
$(9,9.5]$ & 0.214913\,@\,9.5 & 0.030719\,@\,27 & 0.037635\,@\,18.5 & 0.046597\,@\,16 & 0.058453\,@\,14.5 & 0.074981\,@\,14.5 & 0.101719\,@\,14.5 & 0.157314\,@\,15.5 & 0.144660\,@\,18 & 0.881991 \\
$(9.5,10]$ & 0.212667\,@\,10 & 0.030567\,@\,28 & 0.037662\,@\,19 & 0.046783\,@\,16.5 & 0.058824\,@\,15.5 & 0.075709\,@\,15 & 0.103168\,@\,15 & 0.161580\,@\,16 & 0.153343\,@\,19 & 0.895303 \\
$(10,10.5]$ & 0.210623\,@\,10.5 & 0.030420\,@\,28.5 & 0.037678\,@\,19.5 & 0.046947\,@\,17 & 0.059192\,@\,16 & 0.076373\,@\,15.5 & 0.104742\,@\,16 & 0.166080\,@\,17 & 0.162303\,@\,20 & 0.909358 \\
$(10.5,11]$ & 0.208755\,@\,11 & 0.030272\,@\,29 & 0.037685\,@\,20 & 0.047093\,@\,17.5 & 0.059527\,@\,16.5 & 0.076980\,@\,16 & 0.106338\,@\,16.5 & 0.170479\,@\,17.5 & 0.171539\,@\,21 & 0.923668 \\
\bottomrule
\end{tabular}
\end{center}
\end{landscape}

\begin{landscape}
\scriptsize
\setlength{\tabcolsep}{2.8pt}
\renewcommand{\arraystretch}{1.08}
\begin{center}
\textbf{Table D.3. Logarithmic branch, $11<\alpha\le17$.}\par\medskip
\begin{tabular}{c|c|cccccccc|c}
\toprule
$\alpha$ slab & $E$ & $U_1$ & $U_2$ & $U_3$ & $U_4$ & $U_5$ & $U_6$ & $U_7$ & $U_8$ & total\\
\midrule
$(11,11.5]$ & 0.203347\,@\,11.5 & 0.029354\,@\,29.5 & 0.036396\,@\,20.5 & 0.045134\,@\,18 & 0.056523\,@\,17 & 0.072357\,@\,16.5 & 0.098912\,@\,17 & 0.156956\,@\,18 & 0.156835\,@\,21.5 & 0.870814 \\
$(11.5,12]$ & 0.199912\,@\,12 & 0.028955\,@\,30 & 0.035964\,@\,21 & 0.044558\,@\,18.5 & 0.055712\,@\,17.5 & 0.071173\,@\,17 & 0.097365\,@\,17.5 & 0.154970\,@\,18.5 & 0.157446\,@\,22 & 0.861055 \\
$(12,12.5]$ & 0.196729\,@\,12.5 & 0.028581\,@\,30.5 & 0.035559\,@\,21.5 & 0.044019\,@\,19 & 0.054950\,@\,18 & 0.070058\,@\,17.5 & 0.095898\,@\,18 & 0.153232\,@\,19.5 & 0.158279\,@\,23 & 0.852305 \\
$(12.5,13]$ & 0.193767\,@\,13 & 0.028231\,@\,31 & 0.035180\,@\,22 & 0.043511\,@\,19.5 & 0.054230\,@\,18.5 & 0.069073\,@\,18.5 & 0.094502\,@\,18.5 & 0.151717\,@\,20 & 0.159062\,@\,24 & 0.844273 \\
$(13,13.5]$ & 0.191001\,@\,13.5 & 0.027901\,@\,31.5 & 0.034824\,@\,22.5 & 0.043032\,@\,20 & 0.053549\,@\,19 & 0.068170\,@\,19 & 0.093171\,@\,19 & 0.150235\,@\,20.5 & 0.160081\,@\,24.5 & 0.836964 \\
$(13.5,14]$ & 0.188412\,@\,14 & 0.027590\,@\,32 & 0.034487\,@\,23 & 0.042578\,@\,20.5 & 0.052903\,@\,19.5 & 0.067308\,@\,19.5 & 0.091898\,@\,20 & 0.148782\,@\,21 & 0.161043\,@\,25.5 & 0.830001 \\
$(14,14.5]$ & 0.185980\,@\,14.5 & 0.027296\,@\,32.5 & 0.034169\,@\,23.5 & 0.042148\,@\,21 & 0.052287\,@\,20 & 0.066484\,@\,20 & 0.090822\,@\,20.5 & 0.147357\,@\,21.5 & 0.162142\,@\,26 & 0.823685 \\
$(14.5,15]$ & 0.183690\,@\,15 & 0.027018\,@\,33 & 0.033867\,@\,24 & 0.041738\,@\,21.5 & 0.051700\,@\,20.5 & 0.065695\,@\,20.5 & 0.089782\,@\,21 & 0.146154\,@\,22.5 & 0.163237\,@\,27 & 0.817881 \\
$(15,15.5]$ & 0.181529\,@\,15.5 & 0.026753\,@\,33.5 & 0.033580\,@\,24.5 & 0.041347\,@\,22 & 0.051139\,@\,21 & 0.064937\,@\,21 & 0.088778\,@\,21.5 & 0.145024\,@\,23 & 0.164392\,@\,27.5 & 0.812479 \\
$(15.5,16]$ & 0.179485\,@\,16 & 0.026502\,@\,34 & 0.033309\,@\,25.5 & 0.040973\,@\,22.5 & 0.050601\,@\,21.5 & 0.064209\,@\,21.5 & 0.087805\,@\,22 & 0.143901\,@\,23.5 & 0.165585\,@\,28.5 & 0.807370 \\
$(16,16.5]$ & 0.177548\,@\,16.5 & 0.026263\,@\,34.5 & 0.033055\,@\,26 & 0.040615\,@\,23 & 0.050085\,@\,22 & 0.063507\,@\,22 & 0.086861\,@\,22.5 & 0.142787\,@\,24 & 0.166783\,@\,29 & 0.802504 \\
$(16.5,17]$ & 0.175708\,@\,17 & 0.026034\,@\,35 & 0.032812\,@\,26.5 & 0.040271\,@\,23.5 & 0.049621\,@\,23 & 0.062830\,@\,22.5 & 0.085946\,@\,23 & 0.141682\,@\,24.5 & 0.168050\,@\,30 & 0.797954 \\
\bottomrule
\end{tabular}
\end{center}
\end{landscape}

\section{Certification and reproducibility}\label{app:arithmetic}

The computer-assisted part of the proof is a fixed finite certificate, not a numerical optimization oracle.  Its proof object consists of the following files, archived with the arXiv source:
\begin{enumerate}[label=(\roman*),leftmargin=2em]
\item \path{Scalar_Certificate_HalfUnit.csv}, containing the $34$ slabs, the proposed maximizing half-integer for each of the $272$ nodal terms, an upward-rounded value bound, and the two directed ratio enclosures required by Corollary~\ref{cor:ratio};
\item \path{verify_scalar_certificate_exact.py}, a fixed-candidate checker using only Python integer arithmetic and \texttt{fractions.Fraction};
\item \path{verify_fixed_scalar_certificate.py}, an independent interval-arithmetic cross-check of the same fixed candidates;
\item \path{verify_T0_certificate.py} and \path{T0_Certificate.csv}, which verify the finite rational cover in Table~\ref{tab:T0};
\item \path{generate_scalar_certificate.py}, a candidate-generation program included for reproducibility.  It is not used by the fixed-candidate proof checker.
\end{enumerate}

The exact checker encloses logarithms by
\[
 \log\frac{1+z}{1-z}
 =2\sum_{m=0}^{N}\frac{z^{2m+1}}{2m+1}+R_N,
 \qquad
 |R_N|<\frac{2|z|^{2N+3}}{(2N+3)(1-z^2)}.
\]
For $11\le x\le17$, range reduction
\[
 \log x=4\log2+\log(x/16)
\]
keeps $|z|\le5/27$; for $\log2$ one has $z=1/3$.  Integer powers are exact.  A half-integer power of a positive rational interval is reduced to an integer power times a square root, and each square root is enclosed by rational decimal endpoints whose squares are checked exactly.  Thus the script verifies every ratio sign and every upper value using exact rational inequalities.

The smallest ratio margins in the fixed certificate are still comfortably resolved:
\[
 \min\bigl(R(s_\ast-1/2)-1\bigr)
 >4.7694\times10^{-6},
\]
where the minimum is taken only when a predecessor exists, and
\[
 \min\bigl(1-R(s_\ast)\bigr)
 >6.5842\times10^{-5}.
\]
The exact checker verifies all $272$ nodal maxima and reproduces the worst row
\[
 (0,1/2],\qquad F<0.966537.
\]
The interval checker provides a separate implementation of the same fixed-candidate verification.  The exact-rational $T_0$ checker verifies all $15$ rows in Table~\ref{tab:T0}, including both $g(U)<G$ and $C(L)G<3/200$.

For archive integrity, the source-package file \path{README_arXiv.txt} lists the SHA--256 hashes of the CSV certificate and all verification scripts.  These hashes identify the exact proof objects used to produce the bounds reported here.

The commands
\begin{verbatim}
python verify_scalar_certificate_exact.py
python verify_fixed_scalar_certificate.py
python verify_T0_certificate.py
\end{verbatim}
reproduce all proof-critical finite checks.  The generator may be run separately to reproduce the CSV; the proof checkers verify the fixed certificate independently of that generation step.

\section*{Code and data availability}
The exact scalar certificate, the fixed-candidate verifiers, the first-interval certificate, and the corresponding verification scripts are included in the source package of this arXiv version.  The versioned arXiv source package is the persistent archive of the proof objects used in this article.  The accompanying \path{README_arXiv.txt} records SHA--256 hashes for the certificate files and verification programs.  The candidate-generation program is included for reproducibility but is not used by either proof checker.

\section*{Acknowledgments}
The author thanks Lech Mazur for making the proof architecture and formal evidence publicly available, and Terence Tao for the streamlined scalar reduction and exposition that made the present optimization argument possible.

\section*{Declaration on the use of large language models}
Large language models played a substantial role in the development of this work.  OpenAI's ChatGPT, including GPT-5-series models used during 2026, was used to propose and compare proof architectures, derive candidate algebraic identities, generate exploratory symbolic and interval-arithmetic code, assist with selected Lean checks of auxiliary identities, and edit the manuscript.  The author formulated the research questions, selected and corrected the mathematical arguments, independently reran the computations, and takes full responsibility for the theorem, proof, numerical certificate, and exposition.  No language-model output is treated as mathematical evidence; the proof rests on the displayed analysis and the reproducible finite certificate.

\end{document}